\documentclass[11pt]{article}
\usepackage[pagewise]{lineno}
\usepackage{amsmath,amsfonts,amssymb,amsthm}
\usepackage{mathtools,mathrsfs,yhmath}
\usepackage{graphicx,color,xcolor}
\usepackage{enumitem}
\usepackage{blindtext}
\usepackage{multicol}
\usepackage{color}
\usepackage{comment}
\usepackage{wrapfig}
\usepackage{dsfont}
\usepackage{graphicx}
\usepackage{thmtools}
\usepackage[backend=bibtex, sorting=nty, doi=false, url=false, isbn=false, maxbibnames=5]{biblatex}

\bibliography{McKeanLandau}

\newcommand\blfootnote[1]{%
	\begingroup
	\renewcommand\thefootnote{}\footnote{#1}%
	\addtocounter{footnote}{-1}%
	\endgroup
}

 \allowdisplaybreaks
\numberwithin{equation}{section}
\numberwithin{equation}{section}
\newtheorem{theorem}{Theorem}[section]
\newtheorem{lemma}[theorem]{Lemma}
\newtheorem{proposition}[theorem]{Proposition}
\newtheorem{corollary}[theorem]{Corollary}

\newtheorem{remark}[theorem]{Remark}
\newcommand\numberthis{\addtocounter{equation}{1}\tag{\theequation}}

\definecolor{orman}{rgb}{0.24, 0.7, 0.44}

\usepackage{hyperref}
\hypersetup{colorlinks=true, linkcolor=blue, filecolor=magenta, urlcolor=blue, citecolor=red}

\usepackage[capitalize, nameinlink]{cleveref}
\crefname{enumi}{}{parts}
\crefname{assumption}{Assumption}{Assumptions}

\def\d{\,\mathrm{d}}

\newcommand{\Id}{\operatorname{Id}}
\newcommand{\R}{\mathbb{R}}

\newcommand{\lp}{\left(}
\newcommand{\rp}{\right)}

\let\eps\varepsilon
\let\mc=\mathcal

\DeclarePairedDelimiter{\abs}{\lvert}{\rvert}
\DeclarePairedDelimiter{\norm}{\lVert}{\rVert}
\makeatletter
\let\oldabs\abs
\def\abs{\@ifstar{\oldabs}{\oldabs*}}
\let\oldnorm\norm
\def\norm{\@ifstar{\oldnorm}{\oldnorm*}}
\makeatother

\hypersetup{pdftitle={McKeanLandau}}
\hypersetup{pdfauthor={Jonathan Junn\'e, Raphael Winter  \& Havva Yoldaş }}

\author{Jonathan Junn\'e\footnote{University of Rennes, Inria, CNRS, IRMAR -- UMR 6625, F-35000 Rennes, France. \href{mailto:jonathan.junne@inria.fr}{jonathan.junne@inria.fr}}
	\and Raphael Winter\footnote{School of Mathematics, Cardiff University,  Senghennydd Road, Cardiff CF24 4AG, UK.  \href{mailto:WinterR6@cardiff.ac.uk}{WinterR6@cardiff.ac.uk}}
	\and Havva Yolda\c{s}\footnote{Delft Institute of Applied Mathematics, Faculty of Electrical Engineering, Mathematics and Computer Science, Delft University of Technology, Mekelweg 4, 2628CD Delft, The Netherlands. \href{mailto:H.Yoldas@tudelft.nl}{H.Yoldas@tudelft.nl}}}
	
\title{A counterexample to McKean's conjecture for the Landau-Coulomb equation}
\date{\today}
\begin{document}
	\maketitle

	\begin{abstract}
	We disprove McKean's conjecture, which asserts that the entropy dissipation is monotone nonincreasing along solutions, or equivalently that the entropy is convex in time, for the spatially homogeneous Landau-Coulomb equation. We provide an explicit counterexample which consists of a Maxwellian equilibrium under radially symmetric, smooth perturbation on an annulus of scale $R\gg 1$. Developing the derivative of the entropy dissipation in powers of $R$ reveals that the leading order terms can be positive for a suitably chosen perturbation. Remarkably, the counterexamples are close to equilibrium in relative entropy and standard Sobolev norms. Since the Landau equation arises from the Boltzmann equation in the grazing collisions limit, our counterexample also shows that McKean's conjecture is not true in general for the Boltzmann equation.
		
		\blfootnote{\emph{Keywords and phrases.} McKean's Conjecture, Landau-Coulomb Equation, Kinetic theory, Fisher information.} 
		\blfootnote{\emph{2020 Mathematics Subject Classification.} 35Q70, 82C40, 58J35.}
	\end{abstract}

	\tableofcontents
	
	\section{Introduction}

The $H$-Theorem of Boltzmann is the fundamental pillar of statistical thermodynamics. For a collisional kinetic equation describing the evolution of a probability distribution $f$ of particles, it asserts that the entropy,
\begin{align*}
	H (f) \coloneq  \int f \log f 
\end{align*}
	is monotone decreasing in time, i.e., 
	\begin{align*}
		D_H(f) \coloneq -\frac{\d}{\d t } H (f) \geq 0.
	\end{align*}
It is also well known that the dissipation $D_H(f)$ vanishes if and only if $f$ is a Maxwellian velocity distribution. 

We introduce the centered Maxwellian distribution $M_T$ with temperature $T>0$ given by
\begin{align}
	\label{def:Maxwellian}
	M_T(v) \coloneq  (2\pi T)^{-3/2}
	\exp\lp -\frac{|v|^2}{2T}\rp, \quad v \in \R^3.
\end{align}

In~\cite{McKean63, McKean66}, McKean conjectured, originally for the one-dimensional Kac's dynamics~\cite{Kac56}, that higher derivatives of the entropy have alternating signs. We prove that this conjecture does not hold for the spatially homogeneous Landau--Coulomb equation~\cite{L58} given by 
 \begin{align}\label{eq:Landau_Coulomb}
 	\partial _t f = Q(f,f) = \nabla_v \cdot \Big(A  [ f  ] \nabla_v f - \lp\nabla_v \cdot A [ f ]\rp f \Big),
 \end{align}
where the diffusion matrix $A [f]$ associated with $f$ is defined as 
\begin{align*}
	A [f] (v) \coloneq  \int_{\R^3} \frac{1}{|v-w|} \Pi_{v-w}^\perp f(w) \d w.
\end{align*}
The Landau collision operator is classically interpreted as the diffusion approximation of the Boltzmann collision operator in a regime dominated by grazing collisions~\cite{AV04, DL-D92, De92, BW23}. 

It is well-known that the Landau operator can also be written in the nondivergence form
\begin{align}\label{eq:non_div}
	 Q(f,f)=A[f]:\nabla^2f+8\pi f^2.
\end{align}

\medskip
	We will prove that the time derivative of the entropy dissipation is positive for perturbation of the unit Maxwellian $ M(v) = M_1(v) $ of the form 
\begin{align} \label{def:fR}
	f_R(v) = M(v) + R^{-K-3} G(v/R) \eqcolon M(v) + g_R(v)
\end{align}
where $K>2$, $R$ large enough and $G$ is the explicit function given by
\begin{align} \label{def:G}
	G (y) \coloneqq \begin{cases}
		\frac{1}{Z} \exp (-|y|^2- |y|^{-2}),  \quad Z \coloneqq \int_{\R^3} \exp (-|y|^2- |y|^{-2}) \d y, &y \neq 0, \\
		0, \quad &y =0.
	\end{cases}
\end{align}
To avoid additional scaling factors, we will work with the functions $f_R$ without normalizing them to probability densities. Of course, all results below also hold for the normalized functions. 

\begin{theorem}[Failure of McKean's conjecture for Landau-Coulomb equation] \label{thm:counter_example} 
Let $f_R$ be the smooth, exponentially decaying sequence of functions given in~\eqref{def:fR} with $R>1$. Further let $F_R$ be the solution of~\eqref{eq:Landau_Coulomb} with initial datum $f_R$. Then, the entropy dissipation increases near $t=0$ for $R$ large enough. More precisely, for $R$ large enough, 
	\begin{align*}
			\left.\frac{\d}{\d t}D_H(F_R(t))\right|_{t=0} >0. 
	\end{align*}
	
\end{theorem}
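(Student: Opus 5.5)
We will compute $\frac{\d}{\d t}D_H(F_R)|_{t=0}$ explicitly as a functional of the initial datum and expand it in powers of $R$. The starting point is the standard formula
\begin{align*}
	D_H(f) = -\int Q(f,f)\log f \d v,
\end{align*}
so that, for a smooth, exponentially decaying solution (short-time well-posedness and regularity for such data being taken from the literature),
\begin{align*}
	\frac{\d}{\d t}D_H(F)\Big|_{t=0} = -\int \Big( DQ(f)[Q(f,f)]\,\log f + \frac{Q(f,f)^2}{f}\Big)\d v, \qquad f=f_R ,
\end{align*}
where $DQ(f)[h]=Q(h,f)+Q(f,h)$. We then write $f_R=M+g_R$ and use $Q(M,M)=0$. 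Hence $Q(f_R,f_R)=L g_R + Q(g_R,g_R)$, with $L g = Q(M,g)+Q(g,M)$ the linearized operator. We also expand $\log f_R=\log M+\log(1+g_R/M)$.

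The key observation is that $g_R$ lives on the annulus $|v|\sim R$, where $M$ is exponentially small. There $g_R\gg M$, so $f_R\approx g_R$ and $\log f_R\approx \log g_R = -(K+3)\log R+\log G(v/R)$. Consequently every term is, up to exponentially small errors in $R$, a term involving only $g_R$ together with the fields $A[M]$ and $\nabla\cdot A[M]$ generated by $M$. By Newton-type asymptotics, these fields at $|v|\sim R$ behave like
\begin{align*}
	A[M](v)\approx |v|^{-1}\Pi^\perp_v , \qquad \nabla\cdot A[M](v)\approx -2v/|v|^3,
\end{align*}
plus corrections of order $|v|^{-3}$. Likewise $A[g_R]$ acts on the annulus and, by scaling, equals $R^{-K-1}A[G](\cdot/R)$. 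We change variables $v=Ry$ in each integral and track powers of $R$. Because $G$ is radial, $\nabla G\parallel y$ is annihilated by $\Pi^\perp_y$, so the leading contribution of $A[M]$ reduces to the drift and a trace term. We will classify all terms into three families:
\begin{itemize}
	\item[(i)] the pure self-interaction $g_R$--$g_R$ terms, of size $R^{-2K-\cdots}$, which carry the Coulomb "self" dynamics of $G$;
	\item[(ii)] cross terms $M$--$g_R$, of size $R^{-K-\cdots}$;
	\item[(iii)] terms in which $\nabla^2\log M$ or the Gaussian mass appears near the origin.
\end{itemize}
The terms in (iii) are exponentially small, or lower order by the decay of $g_R$ near $0$ (note $G$ vanishes to infinite order at $y=0$).

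The role of $K>2$ is to decide which family dominates: we expect the cross terms, linear in the perturbation amplitude and coming from the Maxwellian's far field acting on $G$, to be the leading order. The leading term should then be an explicit radial integral $R^{-\alpha}\,\mathcal{I}[G]$ with $\mathcal{I}$ a quadratic-type functional of $G$ in the far-field operator $\mathcal{L}G = \nabla\cdot(|y|^{-1}\Pi^\perp_y\nabla G + 2y|y|^{-3}G)$. Concretely, it should be of the form
\begin{align*}
	-\int \Big( \mathcal{L}^2 G\,\log G + \frac{(\mathcal{L}G)^2}{G}\Big) \d y ,
\end{align*}
possibly with a logarithmic-in-$R$ prefactor from $\log R$ multiplying $\int \mathcal{L}^2G=0$, which vanishes by mass conservation. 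The heuristic is that near the Maxwellian tail the dynamics is a linear Fokker–Planck-like drift toward the origin. For a linear drift-diffusion, convexity can fail when the entropy is measured relative to the wrong reference measure: here $\log G$ rather than $\log$ of the equilibrium.

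The final step is to evaluate $\mathcal{I}[G]$ for the explicit $G=Z^{-1}e^{-|y|^2-|y|^{-2}}$ in radial coordinates and show it is strictly positive. We then check that all remainders are $o(R^{-\alpha})$: the Newton-expansion corrections of $A[M]$ gain $R^{-2}$, the $g_R$--$g_R$ terms gain $R^{-K+\text{const}}$ which is small since $K>2$, and the near-origin and $M$-dominated regions contribute exponentially small amounts.

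The main obstacle is this bookkeeping. One must identify exactly the leading power of $R$ and make sure no cancellation, for instance from mass and energy conservation or from the radial symmetry killing $\Pi^\perp$, makes the leading coefficient vanish. If it does vanish, one has to push to the next order, where the sign of a one-dimensional integral in $|y|$ must be computed explicitly. Controlling the transition region where $g_R\sim M$ (near $|v|\approx\sqrt{2(K+3)\log R}$, i.e.\ where $\log(1+g_R/M)$ switches behaviour) also requires care. Since $G(v/R)$ there is super-exponentially small ($e^{-R^2/|v|^2}$), we expect that region to contribute negligibly.
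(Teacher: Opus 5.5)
There is a genuine gap: your plan throws away the term that actually dominates. Your identity $\frac{\d}{\d t}D_H=-\int\bigl(DQ(f)[Q]\log f+Q^2/f\bigr)\d v$ is correct. Your $\mathcal L$ also gives the leading part of $Q(f_R,f_R)$ on the annulus, $Q\approx R^{-K-6}(\mathcal LG)(v/R)$, which is the paper's $Q_{R,1}$. The error is in the bookkeeping. You replace $\log f_R$ by $\log g_R$ on the annulus and declare the core terms of family (iii) negligible.

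Since $DQ(f)[h]$ conserves mass and energy, the collision invariant $\log M=-|v|^2/2+\mathrm{const}$ integrates to zero against it. Hence $-\int DQ(f)[Q]\log f=-\int DQ(f)[Q]\log(f/M)$. On the annulus $\log(f_R/M)\approx |v|^2/2=R^2|y|^2/2$, which exceeds $\log G=O(1)$ by a factor $R^2$. There $A[M]:\nabla^2 Q\approx R^{-K-9}(\mathcal L^2G)(v/R)$. Using $\int\mathcal L\phi\,|y|^2\d y=-4\int\phi|y|^{-1}\d y$ and $\int\mathcal L\phi\,|y|^{-1}\d y=2\int\phi|y|^{-4}\d y$, this gives
\[
-\frac12R^{-K-4}\int_{\R^3}\mathcal L^2G\,|y|^2\d y=4R^{-K-4}\int_{\R^3}\frac{G(y)}{|y|^4}\d y=\frac83R^{-K-4},
\]
which is exactly the paper's leading term. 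In your unsubtracted form the same quantity sits in the core. Through the long-range Coulomb field, the annular part $Q_{\rm ann}$ of $Q$ generates $A[Q_{\rm ann}]\approx\frac89R^{-K-4}\Id$ near the origin. The corresponding piece of $DQ(f_R)[Q]$ is $\approx A[Q_{\rm ann}]:\nabla^2M$, and it gives $-\int A[Q_{\rm ann}]:\nabla^2 M\,\log M\d v\approx\frac49R^{-K-4}\int M\Delta|v|^2\d v=\frac83R^{-K-4}$. This is a family (iii) term. The flatness of $G$ at $0$ does not make it small, because the interaction is nonlocal. The core even contains individual pieces of order $R^{-K-1}$, coming from $Q\approx\frac23R^{-K-1}\bigl(\int G|y|^{-1}\d y\bigr)\Delta M$ there, and these cancel only through energy conservation.

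Consequently, your proposed functional $-\int\bigl(\mathcal L^2G\log G+(\mathcal LG)^2/G\bigr)\d y$ sits at order $R^{-K-6}$, two powers of $R$ below the true leading order. It is not even the full coefficient at that order: the weight $|v|^2/2$ combined with the $|v|^{-3}$ correction of $A[M]$ contributes a further $12R^{-K-6}\int G|y|^{-6}\d y$. So its sign, which your plan leaves undetermined anyway, says nothing about the sign of $\frac{\d}{\d t}D_H$. The correct leading coefficient $4\int G|y|^{-4}\d y$ is manifestly positive, so no delicate sign computation is needed.

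Physically, the friction $\nabla\cdot A[M]\approx-2v/|v|^3$ drains energy from the annulus into the core. The dissipation is dominated by this energy flux, and the flux grows as mass moves inward. The paper captures the same term through the other form of the derivative, $\int Q\,\delta_f-2\int Q^2/f$. On the annulus $\Pi^\perp_{v-v_*}(\nabla\log g_R-\nabla_*\log M_*)\approx\Pi^\perp_{v-v_*}v$ is large, so $\delta_{f_R}\approx A[M]:v\otimes v\approx 2/|v|$, and pairing this with $Q_{R,1}$ yields $\frac83R^{-K-4}$. A minor point: the transition $g_R\sim M$ occurs at $|v|\sim R^{1/2}$, not at $\sqrt{\log R}$, though that region is still negligible.
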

The proof of the theorem can be found in~\cref{sec:proof_end}.

\begin{remark}[Closeness to equilibrium] We observe that the profile $f_R$, defined in~\eqref{def:fR}, can be chosen arbitrarily close to equilibrium in entropy, Fisher information and Sobolev norms by choosing $K>2, \, R>1$ large enough.
\end{remark}

As a corollary, we obtain that McKean's conjecture is also false for a wide class of kernels for the Boltzmann equation, since the two equations are connected by the grazing collisions limit. We will work in the framework of $H$-solutions introduced by Villani in~\cite{Vi98}. More precisely, consider $H$- solutions $F_{R,\eps}$ with initial datum $f_R$ to a sequence of Boltzmann equations
\begin{align*}
    \partial_t F_{R,\eps} = Q_{B,\eps}(F_{R,\eps},F_{R,\eps}),
\end{align*}
where $(Q_{B,\eps})$ is a sequence of Boltzmann operators 
\begin{align*}
    Q_{B,\eps}(f,f) = \int_{\R^3} \d v_* \int_{S^2} \d \omega B_\eps(v-v_*,\omega) (f'f_*'-ff_*)  ,
\end{align*}
using the standard notation $v'=v-(v-v_*,\omega) \omega$, $v'_*=v_*+(v-v_*,\omega) \omega$ . In the following, we will assume that the sequence of Boltzmann collision operators $(B_\eps(v-v_*,\omega))$ is \emph{grazing} in the sense of~\cite{Vi98}. We also introduce the dissipation of entropy along the Boltzmann equation given by
\begin{align}\label{def:Boltz_Diss}
    D_{H,\eps}(f) = \frac14 \int_{\R^3} \int_{\R^3} \d v \d v_* \int_{S^2} \d \omega B_\eps (v-v_*,\omega) (f'f_*'-ff_*) ( \log(f'f_*')- \log(ff_*)).
\end{align}

\begin{corollary}[Failure of McKean's conjecture for grazing Boltzmann collisions]
\label{cor:Boltzmann}
    Let $R>0$ be large enough and $f_R$ as in~\cref{thm:counter_example}, and $(Q_{B,\eps})$ be a grazing sequence of Boltzmann operators converging to the Landau--Coulomb operator. Then for $\eps>0$ small enough, the entropy dissipation increases at some positive time; there exists a sufficiently small $t^*_\eps$ such that 
    \begin{align*}
        D_{H,\eps}(F_{R,\eps}(t^*_\eps))> D_{H,\eps}(f_R). 
    \end{align*}
\end{corollary}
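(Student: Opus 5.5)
The plan is to transfer the strict inequality of \cref{thm:counter_example} from the Landau flow to the Boltzmann flows by a compactness and stability argument, using Villani's grazing collision limit for $H$-solutions \cite{Vi98}. Fix $R$ large, so that by \cref{thm:counter_example} the Landau solution $F_R$ satisfies $\frac{\d}{\d t}D_H(F_R(t))|_{t=0}>0$. Since $f_R$ is smooth, exponentially decaying and bounded below by a Maxwellian, the Landau--Coulomb equation has a smooth solution near $t=0$, and $t\mapsto D_H(F_R(t))$ is $C^1$ there. Hence there exists $t^*>0$ with
\begin{align*}
\delta \coloneq D_H(F_R(t^*))-D_H(f_R)>0 .
\end{align*}
We then set $t^*_\eps\coloneq t^*$, independent of $\eps$, and aim to show that $D_{H,\eps}(F_{R,\eps}(t^*))-D_{H,\eps}(f_R)\to\delta$ as $\eps\to0$.

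\textbf{Step 1 (initial time).} For the fixed smooth, rapidly decaying function $f_R$, we show $D_{H,\eps}(f_R)\to D_H(f_R)$. This follows from the grazing asymptotics: Taylor-expand $f'f'_*$ and $\log(f'f'_*)$ in the small deflection $v'-v$, in the same way the operator convergence $Q_{B,\eps}\to Q$ on test functions is proved in \cite{Vi98}. The grazing assumption on $(B_\eps)$ controls the second moments of the deflection and makes the remainders vanish. Smoothness of $f_R$ and $\log f_R$, with Gaussian-type growth of the derivatives of $\log f_R$, makes all remainder integrals finite.

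\textbf{Step 2 (time $t^*$).} By \cite{Vi98}, $F_{R,\eps}\to F_R$ weakly up to a subsequence. The $H$-solution of Landau with initial datum $f_R$ coincides with the smooth solution by weak--strong uniqueness, so the whole family converges. Weak convergence alone only gives lower semicontinuity,
\begin{align*}
\liminf_{\eps\to0} D_{H,\eps}(F_{R,\eps}(t^*))\ \ge\ D_H(F_R(t^*)),
\end{align*}
which is exactly the direction we need. The entropy dissipation is a convex, $\Gamma$-liminf type functional, and Villani's proof of the grazing limit for $H$-solutions rests precisely on such a lower semicontinuity of $\sqrt{f'f'_*}-\sqrt{ff_*}$-type expressions. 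If the statement needs to be at a fixed time rather than time-integrated, I would apply it on $[t^*-h,t^*+h]$ and use the continuity of $D_H(F_R(\cdot))$ to pick a good time in that window. Combining with Step 1,
\begin{align*}
\liminf_{\eps\to0}\Big(D_{H,\eps}(F_{R,\eps}(t^*_\eps))-D_{H,\eps}(f_R)\Big)\ \ge\ \delta>0 ,
\end{align*}
which proves the corollary for $\eps$ small. If the inequality only holds at a time in a window, we take $t^*_\eps$ to be that time, which is allowed to depend on $\eps$.

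\textbf{Main obstacle.} The hardest step is Step 2, namely lower semicontinuity of the dissipation at a single time along weakly converging $H$-solutions. Villani's framework naturally yields it only in time-integrated form. The time-averaging argument handles this, but it requires the Landau dissipation along $F_R$ to be continuous in time, which holds because $F_R$ is smooth for short times. A secondary difficulty is checking that Step 1 is compatible with the Coulomb singularity of the limit kernel. Here I would rely on the specific grazing sequence being assumed to converge to Landau--Coulomb in the sense of \cite{Vi98}, together with the uniform Gaussian tails of $f_R$.
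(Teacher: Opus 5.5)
Your final argument is essentially the paper's proof: Villani's time-integrated lower semicontinuity (inequality (54) in \cite{Vi98}), the convergence $D_{H,\eps}(f_R)\to D_H(f_R)$ for the fixed smooth datum, and an averaging argument that produces an $\eps$-dependent time $t^*_\eps$. The single-time liminf you first assert is not what \cite{Vi98} provides and should be dropped; the window around $t^*$ is also unnecessary, since the paper averages over $[0,t_0]$ (the interval on which (54) is stated) and uses that $D_H(F_R(\tau))>D_H(f_R)$ for all $\tau\in(0,t_0]$ by \cref{thm:counter_example}.
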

\begin{proof}
    We use inequality (54) in~\cite{Vi98}, which shows that, for every $t_0 > 0$,
    \begin{align*}
        \int_0^{t_0} D_H(F_R(\tau)) \d \tau \leq \liminf_{\eps \rightarrow 0} \int_0^{t_0} D_{H,\eps} (F_{R,\eps}) \d \tau.
    \end{align*}
    For $t_0>0$ small enough,~\cref{thm:counter_example} shows 
    \begin{align*}
        t_0 D_H(f_{R}) < \int_0^{t_0} D_H(F_R(\tau)) \d \tau.
    \end{align*}
    Since $\lim_{\eps \rightarrow 0} D_{H,\eps} (f_R) = D_{H}(f_R)$, we infer that for $\eps>0$ small enough,
    \begin{align*}
        t_0 D_{H,\eps}(f_R) < \int_0^{t_0} D_{H,\eps} (F_{R,\eps}(\tau)) \d \tau,
    \end{align*}
    and therefore there exists $t^*_\eps \in(0,t_0)$ such that
    \begin{align*}
       D_{H,\eps}(f_R) < D_{H,\eps} (F_{R,\eps}(t^*_\eps))
    \end{align*}
    as claimed.    
\end{proof}

\paragraph*{State of the art.} In~\cite{McKean66}, McKean further conjectured that all the successive time derivatives of the entropy alternate in signs, i.e.,
\begin{align*}
	(-1)^n \frac{\d^n}{\d t^n} H (f) \geq 0, \quad n \geq 1,
\end{align*} 
where $f$ solves one-dimensional Kac's caricature of the spatially homogeneous Boltzmann equation. This stronger version of the conjecture is sometimes referred to as \emph{super-H conjecture}~\cite{Ol82}. In~\cite{Ha67}, Harris proved the super-H conjecture for a toy model with discrete velocities. Bobylev~\cite{Bo75} and Krook--Wu~\cite{KW76, KW77} constructed a particular form of explicit, non-equilibrium solutions, called \emph{BKW solutions} to the nonlinear Boltzmann equation with Maxwell molecules. Then, Olaussen~\cite{Ol82} and Lieb~\cite{Lieb82} disproved the super H-conjecture by studying the BKW solutions. Particularly, in~\cite{Ol82} it was shown that the sign fails starting from $n\geq 102$, whereas the abstract argument in~\cite{Lieb82} excludes the complete monotonicity without giving an explicit $n$ where it starts to fail. Moreover, numerical simulations in~\cite{ZMS81} indicate the complete monotonicity of $H$ for $n\leq 30$ for the BKW solutions. All these results shows that the full super H-conjecture is false but for small values of $n$ it might still hold. 

In~\cite{CG15}, the authors prove that the third and fourth derivatives of entropy along the Gaussian heat flow have alternating signs, thereby confirming McKean’s conjecture up to fourth order, while leaving the full all-orders conjecture open. In a recent preprint~\cite{GS26}, the authors provided a probability measure for which the fifth time derivative of the entropy along the heat flow is positive at some time. Moreover, recently in~\cite{Si26}, Silvestre showed that the entropy production is not always monotone in the case of spatially homogeneous Boltzmann equation with a non-physical collision kernel, and for hard spheres and Maxwell molecules in~\cite{Si26-2}. We also refer to~\cite{Si26, Vi25} for more detailed literature references on McKean's conjecture. 
Silvestre's work raises the  question whether monotonicity of the entropy production is true or false in physically relevant cases.  A corollary of our main result on the Landau--Coulomb equation answers this question by disproving McKean's conjecture for the spatially homogeneous Boltzmann equation in the settings where it leads to the Landau equation in the gazing collisions limit. 

Another related question concerns the monotonicity of the Fisher information which was answered positively in recent breakthrough results by Guillen--Silvestre in~\cite{GS26} for the Landau equation and by Imbert--Silvestre--Villani in~\cite{ISV25} for the Boltzmann equation. Later, we showed that this is not true in general for the multi-species Landau equation in~\cite{JWY25}.

\section{Failure of McKean's conjecture for the sequence $f_R$} \label{sec:proof}
 
 In this section, we prove that the sequence $f_R$ defined in~\eqref{def:fR} is indeed a valid a counterexample for the monotonicity of the entropy dissipation. To this end, we first compute the time derivative of the entropy dissipation.
 
\begin{proposition}[Derivative of entropy dissipation]
	\label{prop:exact-derivative-identity}
	Let $f$ be a classical solution of the Landau-Coulomb equation~\eqref{eq:Landau_Coulomb}. Then
	\begin{equation}\label{eq:exact-derivative-Q-form}
		\frac{\d}{\d t}D_H(f)
		=\int_{\R^3}Q(f,f)\,\delta_{f}\d v	-2\int_{\R^3}\frac{Q(f,f)^2}{f}\d v	,
	\end{equation}
where $\delta_f$ is given by
\begin{align}\label{def:delta_f}
	 \delta_f(v) \coloneqq 	\int_{\R^3} 	f_* \frac{	\left| 	\Pi_{v-v_*} ^\perp	\bigl(\nabla\log f-\nabla_*\log f_*\bigr)\right|^2}{|v-v_*|} 	\d v_*.
\end{align}
\end{proposition}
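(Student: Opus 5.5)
The plan is to differentiate $D_H(f)=-\frac{\d}{\d t}H(f)$ written as the weak pairing $D_H(f)=-\int Q(f,f)\log f\d v$. Since $\int Q(f,f)\d v=0$, differentiating $H(f)=\int f\log f$ gives $\frac{\d}{\d t}H=\int Q(f,f)(\log f+1)=\int Q(f,f)\log f$, so
\begin{align*}
\frac{\d}{\d t}D_H(f) = -\int \partial_t Q(f,f)\,\log f\d v - \int \frac{Q(f,f)\,\partial_t f}{f}\d v = -\int \partial_t Q(f,f)\,\log f\d v - \int\frac{Q(f,f)^2}{f}\d v .
\end{align*}
This already produces one copy of $-\int Q^2/f$. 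The remaining work is to show that $-\int \partial_t Q(f,f)\log f = \int Q\,\delta_f - \int Q^2/f$.

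For this I use the symmetric (Landau) form of the dissipation,
\begin{align*}
D_H(f)=\frac12\iint f f_*\,\frac{\bigl|\Pi^\perp_{v-v_*}(\nabla\log f-\nabla_*\log f_*)\bigr|^2}{|v-v_*|}\d v\d v_*,
\end{align*}
which follows from the divergence form \eqref{eq:Landau_Coulomb} after one integration by parts and symmetrization in $(v,v_*)$. I then differentiate this expression directly in time rather than working with $\partial_t Q$. The integrand depends on time through the product $ff_*$ and through the gradients $\nabla\log f$, $\nabla_*\log f_*$.

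\textbf{Weight term.} Differentiating $ff_*$ gives $\partial_t f\, f_* + f\,\partial_t f_*$. By the $v\leftrightarrow v_*$ symmetry of the kernel, the two pieces are equal, so together they contribute $\int \partial_t f\,\delta_f\d v=\int Q(f,f)\,\delta_f\d v$ with $\delta_f$ as in \eqref{def:delta_f}. The factor $\frac12$ cancels against the factor $2$ coming from symmetry.

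\textbf{Gradient term.} Differentiating the squared norm gives
\begin{align*}
\iint f f_*\,\frac{\Pi^\perp(\nabla\log f-\nabla_*\log f_*)\cdot(\nabla \partial_t\log f-\nabla_*\partial_t\log f_*)}{|v-v_*|}\d v\d v_*,
\end{align*}
again with $\Pi^\perp=\Pi^\perp_{v-v_*}$. Using symmetry, this equals twice the $v$-part. Setting $\varphi=\partial_t\log f=Q/f$, the $v$-part is exactly the weak form of the Landau operator tested against $\varphi$:
\begin{align*}
\iint f f_*\frac{\Pi^\perp(\nabla\log f-\nabla_*\log f_*)\cdot\nabla\varphi}{|v-v_*|}=\int \nabla\varphi\cdot\bigl(A[f]\nabla f - (\nabla\cdot A[f])f\bigr) = -\int Q(f,f)\,\varphi .
\end{align*}
Here I use the identity $\int f_*\Pi^\perp\nabla_*f_*/(f_*|v-v_*|)\d v_*=-\nabla\cdot A[f]$, obtained by integration by parts in $v_*$. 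With the factor $2$ from symmetry, and the prefactor $\frac12$ of $D_H$ cancelling it, the gradient term contributes $-\int Q^2/f\cdot 2\cdot\frac12\cdot 2$. This has to be checked carefully.

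\textbf{Constant bookkeeping.} Collecting terms, $\frac{\d}{\d t}D_H=\int Q\delta_f + (\text{gradient term})$, and the gradient term should be $-2\int Q^2/f$. Indeed, the gradient term of $\frac12\iint\cdots$ is $\frac12\cdot 2\cdot 2\cdot(v\text{-part})=2\cdot(-\int Q\varphi)=-2\int Q^2/f$. Here the first $2$ comes from differentiating a square and the second $2$ from symmetry. This gives \eqref{eq:exact-derivative-Q-form}.

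\textbf{Main obstacle.} The main obstacle is justifying the identity between the divergence form and the symmetric form, including the sign and constant in $\nabla\cdot A[f]$, together with all the interchanges of $\partial_t$ and integrals. The cleanest route is to assume $f$ is smooth with Gaussian-type decay and $f>0$, which holds for the classical solutions considered. Care is needed with $Q^2/f$ integrability, which is where I would verify the decay assumptions.
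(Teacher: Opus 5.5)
The paper states this proposition without proof, so there is no argument to compare against. Your derivation is the natural one and it is correct, constants included. You differentiate the symmetrized form $D_H(f)=\frac12\iint ff_*\,|\Pi^\perp_{v-v_*}(\nabla\log f-\nabla_*\log f_*)|^2/|v-v_*|\d v\d v_*$:
\begin{itemize}
\item the weight term gives $\int Q(f,f)\,\delta_f\d v$ by the $v\leftrightarrow v_*$ symmetry;
\item the gradient term gives $\frac12\cdot 2\cdot 2$ times $\iint ff_*\,\frac{\Pi^\perp_{v-v_*}(\nabla\log f-\nabla_*\log f_*)}{|v-v_*|}\cdot\nabla\varphi\d v\d v_*=-\int Q(f,f)\,\varphi\d v$ with $\varphi=Q(f,f)/f$, that is, $-2\int Q(f,f)^2/f\d v$.
\end{itemize}

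One slip needs fixing: the identity you quote has the wrong sign. $\nabla\cdot A[f]$ is the convolution of $\Pi^\perp_z/|z|$ with $\nabla f$. Equivalently, $\nabla_z\cdot(\Pi^\perp_z/|z|)=-2z/|z|^3$, and one integrates by parts in $v_*$. Either way,
\[
\int_{\R^3}\frac{\Pi^\perp_{v-v_*}}{|v-v_*|}\nabla_*f_*\d v_* = +\nabla\cdot A[f](v).
\]
With your minus sign, the flux would come out as $A[f]\nabla f+(\nabla\cdot A[f])f$ and your weak-form display would fail. With the plus sign it holds, and your display is already the correct version. A quick check: for $f=M$ the flux $\int\frac{\Pi^\perp_{v-w}}{|v-w|}MM_w(w-v)\d w$ vanishes, as it must.

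Two smaller points:
\begin{itemize}
\item Your opening paragraph with $-\int\partial_tQ\,\log f$ is never used and can be dropped. It is consistent with your final result, which implies $-\int\partial_tQ\,\log f=\int Q\,\delta_f-\int Q^2/f$.
\item For rigor you need $f>0$ and enough regularity and decay to pass $\partial_t$ under the double integral. You also need the integration by parts against the polynomially growing test function $\varphi=Q/f$ to leave no boundary terms. The paper likewise leaves this implicit in ``classical solution''; it is what is tacitly assumed for the Gaussian-tailed solutions starting from $f_R$.
\end{itemize}
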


Our goal is to prove that for $f_R$ given by~\eqref{def:fR} the right hand side of~\eqref{eq:exact-derivative-Q-form} is positive. We therefore introduce
\begin{align} \label{def:I_II_R}
		\frac{\d}{\d t}D_H(F_R) =
	\int_{\R^3}Q(f_R,f_R)\,\delta_{f_R}\d v	-2\int_{\R^3}\frac{Q(f_R,f_R)^2}{f_R}\d v = \mbox{I}_R+  \mbox{II}_R .
\end{align} 
The term $\mbox{I}_R$ will contain the leading order term, whereas the term $\mbox{II}_R$ will be an error term. 

To this end, we need to expand both $Q(f_R,f_R)$ and $\delta_{f_R}$. We  observe that, since $M$ is an equilibrium distribution, we have
\begin{align}\label{Pi_identity}
	\Pi_{v-v_*}^\perp	\lp \nabla\log f_R-\nabla_*\log f_{R*}\rp =  \Pi_{v-v_*}^\perp \lp \frac{v g_R+ \nabla g_R}{M + g_R} - \frac{v_* g_{R*}+ \nabla_* g_{R*}}{M_* + g_{R*}} \rp. 
\end{align}

For a more detailed expansion, we decompose the terms $\mbox{I}_R$ and $\mbox{II}_R$ further into the contributions of the annuli 
\begin{align}\label{domain}
	\Omega_R^{\rm in} \coloneqq \{|v| \leq R^\frac25\}, \qquad	\Omega_R^{\rm mid} \coloneqq \{R^\frac25 \leq |v| \leq R^\frac35\}, \qquad 
	\Omega_R^{\rm out}  \coloneqq \{|v| \geq R^\frac35\}.
\end{align}
This leads to the decomposition of   $\mbox{I}_ R, \mbox{II}_R$ defined in~\eqref{def:I_II_R} into, writing $Q \coloneq Q(f_R,f_R)$,
\begin{align*}
	\mbox{I}_R &= \int_{\Omega_R^{\rm in}}Q\,\delta_{f_R}\d v	 + \int_{\Omega_R^{\rm mid}}Q\,\delta_{f_R}\d v + \int_{\Omega_R^{\rm out}}Q\,\delta_{f_R}\d v \coloneq \mbox{I}_R ^{\rm in} + \mbox{I}_R ^{\rm mid} +  \mbox{I}_R ^{\rm out}, \\
		\mbox{II}_R &= -2\int_{\Omega_R^{\rm in}}\frac{Q^2}{f_R}\d v - 2\int_{\Omega_R^{\rm mid}}\frac{Q^2}{f_R}\d v- 2\int_{\Omega_R^{\rm out}}\frac{Q^2}{f_R}\d v \coloneq \mbox{II}_R ^{\rm in} + \mbox{II}_R ^{\rm mid} +  \mbox{II}_R ^{\rm out}.
\end{align*}

We will show that $\mbox{I}_R ^{\rm out}$ gives the positive, leading order contribution of order $O(R^{-K-4})$, and the remaining terms are of higher order. This is the content of the remaining sections. Throughout the proofs, generic constants $c,C > 0$ may change from line to line and within the same displayed equation.

\subsection{Estimates of the inner region contributions}

\begin{proposition}[Inner contribution]
	\label{prop:inner-contribution}
	There exists $C>0$ such that, for all sufficiently large $R$,
	\begin{equation}\label{prop:inner_estimate}
	|\RN{1}_R ^{\rm in}| +	|\RN{2}_R ^{\rm in}| \leq 	2\int_{\Omega_R^{\rm in}} \lp
		\frac{Q(f_R,f_R)^2}{f_R}	+	|Q(f_R,f_R)|\,\delta_{f_R} 	\rp\d v \leq	CR^{-2K-2}.
	\end{equation}
\end{proposition}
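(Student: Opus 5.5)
Inside $\Omega_R^{\rm in}=\{|v|\le R^{2/5}\}$ the perturbation $g_R$ and all its derivatives are smaller than any power of $R$: $G(y)\le Z^{-1}e^{-|y|^{-2}}$ and $|y|=|v|/R\le R^{-3/5}$, so $|\nabla^k g_R(v)|\le C_k R^{-K-3-k}\exp(-cR^{6/5})$. The plan is therefore to linearize $Q(f_R,f_R)$ around $M$ and to bound $\delta_{f_R}$ pointwise on the inner region. Since $Q(M,M)=0$, bilinearity gives
\[
Q(f_R,f_R)=Q(M,g_R)+Q(g_R,M)+Q(g_R,g_R).
\]
Write each term in the nondivergence form~\eqref{eq:non_div}, i.e.\ $A[\cdot]:\nabla^2(\cdot)+8\pi(\cdot)(\cdot)$. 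Every term containing $g_R$ or its derivatives evaluated at $v\in\Omega_R^{\rm in}$ is superpolynomially small times $M(v)$-type factors. The only genuinely nonlocal term is $Q(g_R,M)=A[g_R]:\nabla^2M+8\pi g_R M$, whose main part is $A[g_R]:\nabla^2 M$. For $|v|\le R^{2/5}$ and $w$ in the support scale of $g_R$ (essentially $|w|\sim R$), we have $|v-w|\gtrsim R$. Hence $|A[g_R](v)|\le C R^{-1}\|g_R\|_{L^1}=CR^{-K-1}$, because $\|g_R\|_{L^1}=R^{-K}$. The region $|w|\le R/2$ carries only $e^{-cR^{\cdots}}$ mass, so it can be absorbed. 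This yields
\[
|Q(f_R,f_R)(v)|\le CR^{-K-1}(1+|v|^2)M(v)\quad\text{on }\Omega_R^{\rm in}.
\]

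Next I will bound $Q^2/f_R$. Since $f_R\ge M$, this gives $Q^2/f_R\le CR^{-2K-2}(1+|v|^2)^2M$, and integrating yields $CR^{-2K-2}$.

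For $\delta_{f_R}$ I will use~\eqref{Pi_identity}. At $v\in\Omega_R^{\rm in}$ the first quotient $(vg_R+\nabla g_R)/(M+g_R)$ must be handled carefully, since $M$ is tiny there. The ratio $g_R/M$ is of size $e^{|v|^2/2-cR^{6/5}}$, and because $|v|^2\le R^{4/5}\ll R^{6/5}$ it is still superpolynomially small. So the $v$-part contributes only a negligible amount. The $v_*$-part is $h(v_*)\coloneqq(v_*g_{R*}+\nabla_*g_{R*})/f_{R*}$. I will bound $f_{R*}|h(v_*)|^2\le |v_*g_{R*}+\nabla g_{R*}|^2/g_{R*}$, which equals $R^{-K-3}R^{2}\bigl|y+\tfrac{1}{R^2}\nabla\log G\bigr|^2G$ in the variable $y=v_*/R$. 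Integrating against $|v-v_*|^{-1}$, which is of size $R^{-1}$ on the bulk of $G$, gives $\delta_{f_R}(v)\le CR^{-K-2}$. The singular part near $v_*=v$ is integrable, and $g_R$ is negligible there.

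I expect this growth of $\delta_{f_R}$ to be the main obstacle: one must check that the singular factor $|\nabla\log G|^2\sim|y|^{-6}$ near $y=0$ is killed by $e^{-|y|^{-2}}$, and that $f_*$ never enters with a large ratio. Granting the bound, $\int|Q|\delta_{f_R}\le CR^{-K-1}\cdot R^{-K-2}\le CR^{-2K-2}$, which combined with the $Q^2/f_R$ estimate gives~\eqref{prop:inner_estimate}.
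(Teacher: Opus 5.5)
Your estimate for $Q(f_R,f_R)$ on $\Omega_R^{\rm in}$ and for $\int_{\Omega_R^{\rm in}}Q^2/f_R$ is fine and follows the paper. The gap is in the bound on $\delta_{f_R}$.

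After using \eqref{Pi_identity}, you drop the projection and bound $f_{R*}|h(v_*)|^2$ by $|v_*g_{R*}+\nabla g_{R*}|^2/g_{R*}$. On the bulk of $g_R$ (where $|v_*|\sim R$ and $g_{R*}/f_{R*}\approx 1$) one has
\[
h(v_*)=\frac{g_{R*}}{f_{R*}}\bigl(1-2R^{-2}+2R^2|v_*|^{-4}\bigr)v_*,
\]
which has size $\sim R$. Your scaling count also omits the Jacobian $\d v_*=R^3\d y$. Done correctly, the integral is
\[
\int R^{-K-3}G(y)\,R^2\bigl|y+R^{-2}\nabla\log G(y)\bigr|^2\,\frac{R^3\d y}{R\,|v/R-y|}\sim R^{-K+1},
\]
not $R^{-K-2}$. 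So your route only yields $\int_{\Omega_R^{\rm in}}|Q|\,\delta_{f_R}\lesssim R^{-K-1}\cdot R^{-K+1}=R^{-2K}$. That is two powers of $R$ short of the claim. Against the leading term $\frac83R^{-K-4}$, it would only close the theorem for $K>4$, not $K>2$.

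In fact your intermediate claim $\delta_{f_R}(v)\le CR^{-K-2}$ is false. For $|v|\sim1$ and $|v_*|\sim R$ one has $\nabla\log f_R(v)-\nabla_*\log f_{R*}=-v+O(R^{-1})$. Averaging $|\Pi^\perp_{v-v_*}v|^2$ against $g_{R*}/|v-v_*|$ then gives $\delta_{f_R}(v)\gtrsim R^{-K-1}$. The obstacle you flagged (the $|y|^{-6}$ singularity of $|\nabla\log G|^2$) is harmless. The real issue is the size of the $v_*$-part at $|v_*|\sim R$.

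The missing idea is to keep the projection. Since $h(v_*)$ is parallel to $v_*$ and $\Pi^\perp_{v-v_*}v_*=\Pi^\perp_{v-v_*}v$, you get
\[
|\Pi^\perp_{v-v_*}h(v_*)|\le\frac{g_{R*}}{f_{R*}}\bigl(1+2R^2|v_*|^{-4}\bigr)|v|.
\]
The paper uses the slightly cruder bound $|v|+|v_*|R^{-2}+R^2|v_*|^{-3}$. Either way, the factor $R$ is replaced by $|v|$. This gives $\delta_{f_R}(v)\le CR^{-K-1}\langle v\rangle^2$ on $\Omega_R^{\rm in}$, and hence $\int|Q|\,\delta_{f_R}\le CR^{-2K-2}\int\langle v\rangle^4M\d v$, which is the stated bound.

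A minor point: the region $\{|w|\le R/2\}$ does not carry exponentially small $g_R$-mass, since $G$ is of order one on $\{|y|\le 1/2\}$. However, $|A[g_R](v)|\le R^{-K-1}\int G(y)|v/R-y|^{-1}\d y\le CR^{-K-1}$ follows by scaling alone, so that step survives.
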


\begin{proof}
	We will make use of the elementary inequality 
	\begin{align}\label{g_R_inner_der}
		|g_R(v)| +  |\nabla g_R(v)|  +|\nabla^2 g_R(v)| \leq C M(v) \exp \lp -cR^{\frac65} \rp \quad  \text{for all } v \in \Omega_R^{\rm in}.
	\end{align}
Using the non-divergence form~\eqref{eq:non_div} of the operator $Q(f,f)$ and the fact that $Q(M, M) =0$, we obtain 
\begin{align}\label{Q_f_R_expansion}
	Q(f_R, f_R) = A[g_R] : \nabla^2 M + A[M] : \nabla^2 g_R + 16 \pi M g_R + Q(g_R, g_R).
\end{align}
Now $A[g_R]$ can be estimated by 
\begin{align*}
|A[g_R] (v)| \leq C R^{-K-1}.
\end{align*}
Since the other terms are exponentially small we can estimate 
\begin{align}\label{Q_R_estimate}
	 |Q(f_R,f_R)(v)|
	\leq
	CR^{-K-1}\langle v\rangle^2M(v) \qquad \text{for all } v \in \Omega_R^{\rm in}.
\end{align}
Using this we can already estimate the first term in~\eqref{prop:inner_estimate}
\begin{align*}
	\int_{\Omega_R^{\rm in}} 	\frac{Q(f_R,f_R)^2}{f_R} \d v \leq C^2	\int_{\R^3}  R^{-2K-2}\langle v\rangle^4M(v ) \d v \leq C R^{-2K-2}.
\end{align*}
It remains to estimate the contriution of $\delta_{f_R}$. 
We combine the identity~\eqref{Pi_identity} with~\eqref{g_R_inner_der} to obtain
\begin{align*}
\left 	|\Pi_{v-v_*}^\perp	\lp \nabla\log f_R-\nabla_*\log f_{R*}\rp \right  | \leq C   \langle v\rangle \exp \lp -c R^{\frac65}\rp + C \lp |v| + \frac{|v_*|}{R^2} + \frac{R^2}{|v_*|^3} \rp \frac{g_{R*}}{M_*+ g_{R*}} \quad \text{for all } v \in \Omega_R^{\rm in}.
\end{align*}
	This allows us to estimate $\delta_{f_R}$ for all $v \in \Omega_R^{\rm in}$ by 
	\begin{align*}
		\delta_{f_R} (v) &=  \int_{\R^3} 	f_{R*} \frac{	\left| 	\Pi_{v-v_*} ^\perp	\bigl(\nabla\log f_R-\nabla_*\log f_{R*}\bigr)\right|^2}{|v-v_*|} 	\d v_* \\
		&\leq C \langle v\rangle^2 \exp \lp -2c R^{\frac65}\rp + C \int_{\R^3}  \lp |v|^2 + \frac{|v_*|^{2}}{R^4} + \frac{R^4}{|v_*|^6} \rp\frac{g_{R*}^2}{M_*+ g_{R*}} |v-v_*|^{-1}\d v_*
		\\& \leq C \langle v\rangle^2 \exp \lp -2c R^{\frac65}\rp + C R^{-1}\int_{\R^3} \lp |v|^2 +  \lp \left|\frac{v_*}{R} \right |^2 + \left | \frac{v_*}{R} \right |^{-6} \rp  R^{-2} \rp g_{R*} \left |\frac{v-v_*}{R} \right |^{-1}\d v_*
		\\ &\leq C R^{-K-1} \langle v\rangle^2,
	\end{align*} 
where we have used that the profile $G$ is flat near the origin.

Inserting this estimate and~\eqref{Q_R_estimate} back into~\eqref{prop:inner_estimate} finishes the proof. 
	
\end{proof}

\subsection{Estimates of the middle region contributions}

\begin{proposition}\label{prop:middle-contribution}
	There exist $c,C>0$ such that, for all sufficiently large $R$,
	\begin{equation} \label{prop:mid_estimate}
		\int_{\Omega_R^{\rm mid}}	\lp	\frac{Q(f_R,f_R)^2}{f_R}	+|Q(f_R,f_R)|\,\delta_{f_R}	\rp \d v	\leq	C\exp \lp -cR^\frac45 \rp .
	\end{equation}
\end{proposition}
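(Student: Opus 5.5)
In the middle region $R^{2/5}\le |v|\le R^{3/5}$, both the Maxwellian and the perturbation are exponentially small. We have $M(v)\le C\exp(-\tfrac12 R^{4/5})$. For $g_R$, since $|v/R|\le R^{-2/5}$,
\[
g_R(v)\le R^{-K-3}Z^{-1}\exp\lp -|v/R|^{-2}\rp\le C\exp\lp -R^{4/5}\rp .
\]
So $f_R$ itself is exponentially small there. The difficulty is that the integrand contains $Q^2/f_R$ and $\nabla\log f_R$, which divide by $f_R$. The plan is therefore to bound $|Q(f_R,f_R)|$ and $\delta_{f_R}$ pointwise by $f_R$ times a polynomial in $v$ and $R$. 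Then
\[
\int_{\Omega_R^{\rm mid}} \frac{Q^2}{f_R}\d v \le \int_{\Omega_R^{\rm mid}} \mathrm{poly}(R)\, f_R\d v ,
\]
and similarly for the $|Q|\delta_{f_R}$ term. Absorbing the polynomial factors into the exponential, at the cost of shrinking $c$, gives the claim.

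\textbf{Step 1: derivative bounds for $g_R$.} By direct differentiation of $G$, for $y=v/R$ we have $|\nabla G(y)|+|\nabla^2 G(y)|\le C(|y|+|y|^{-3})^2 G(y)$, up to polynomial factors. Hence on $\Omega_R^{\rm mid}$,
\[
|\nabla g_R|\le C\,\mathrm{poly}(R)\,g_R,\qquad |\nabla^2 g_R|\le C\,\mathrm{poly}(R)\,g_R .
\]
Here $|y|^{-3}\le R^{6/5}$, so these factors are indeed polynomial in $R$.

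\textbf{Step 2: bound on $Q$.} Use the expansion \eqref{Q_f_R_expansion} and the standard bounds $|A[h]|\le C\|h\|_{L^1\cap L^\infty}$ and $\|A[h]\|\lesssim \langle v\rangle^{-1}$. The four terms are controlled as follows:
\[
|A[g_R]:\nabla^2M|\le CR^{-K-1}\langle v\rangle^2 M,
\]
\[
|A[M]:\nabla^2 g_R|+16\pi M g_R\le \mathrm{poly}(R)\,(M+g_R),
\]
\[
|Q(g_R,g_R)|\le \mathrm{poly}(R)\, g_R .
\]
For the last term we use the nondivergence form and $|A[g_R]|\le CR^{-K-1}$. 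Altogether $|Q(f_R,f_R)|\le \mathrm{poly}(R)\,f_R$ on $\Omega_R^{\rm mid}$. It follows that
\[
\int_{\Omega_R^{\rm mid}} \frac{Q^2}{f_R}\d v\le \mathrm{poly}(R)\int_{\Omega_R^{\rm mid}} f_R\d v\le C\exp\lp -cR^{4/5}\rp .
\]

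\textbf{Step 3: bound on $\delta_{f_R}$.} This is the main obstacle, because $\delta_{f_R}$ integrates over all $v_*$, including points where $f_{R*}$ is not small. By \eqref{Pi_identity} and Step 1, the first fraction in \eqref{Pi_identity} is pointwise bounded, for $v$ in the middle region, by
\[
\frac{|v|g_R+|\nabla g_R|}{M+g_R}\le \mathrm{poly}(R).
\]
The $v_*$ contribution is estimated exactly as in the proof of \cref{prop:inner-contribution}:
\[
\frac{|v_*g_{R*}+\nabla_* g_{R*}|}{M_*+g_{R*}}\le C\lp |v_*|+\frac{|v_*|}{R^2}+\frac{R^2}{|v_*|^3}\rp,
\]
after using $g_{R*}/(M_*+g_{R*})\le 1$. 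Thus $\delta_{f_R}(v)$ is bounded by
\[
\int f_{R*}\,\mathrm{poly}(R,|v_*|)\,|v-v_*|^{-1}\d v_* .
\]
The singularity $R^2/|v_*|^3$ near $v_*=0$ is harmless: there the factor is multiplied by $g_{R*}/(M_*+g_{R*})$, which is exponentially small because $G$ is flat at $0$. The remaining integral is finite by the moments of $f_R$ and local integrability of $|v-v_*|^{-1}$, so $\delta_{f_R}(v)\le \mathrm{poly}(R)$. Combining with Step 2,
\[
\int_{\Omega_R^{\rm mid}} |Q|\,\delta_{f_R}\d v\le \mathrm{poly}(R)\int_{\Omega_R^{\rm mid}} f_R\d v\le C\exp\lp -cR^{4/5}\rp ,
\]
which concludes the proof.
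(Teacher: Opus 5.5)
Your proposal is correct and follows essentially the paper's argument: a pointwise bound $|Q(f_R,f_R)|\le CR^N f_R$ from the nondivergence form, a polynomial-in-$R$ bound on $\delta_{f_R}$ via \eqref{Pi_identity} (keeping the factor $g_{R*}/(M_*+g_{R*})$ to neutralize the $R^2/|v_*|^3$ singularity), and then the exponential smallness of $f_R$ on $\Omega_R^{\rm mid}$. Two cosmetic slips: on $\Omega_R^{\rm mid}$ one has $|v/R|^{-3}\le R^{9/5}$ rather than $R^{6/5}$, and you should not discard $g_{R*}/(M_*+g_{R*})\le 1$ before treating $v_*$ near $0$, because without that factor $\int f_{R*}|v_*|^{-6}|v-v_*|^{-1}\d v_*$ diverges.
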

\begin{proof}
	On the middle region $\Omega_R^{\rm mid} =\{R^\frac25 \leq |v| \leq R^\frac35\}$, $f_R$ and $g_R$ satisfy the estimates
	\begin{align} \label{mid_f_R_g_R}
		f_R + g_R \leq C \exp \lp -c R^\frac45\rp, \qquad |\nabla g_R| \leq C R^Ng_R, \qquad |\nabla^2 f_R| \leq C R^N f_R, \quad \text{for some }N>0.
	\end{align}
The non-divergence form~\eqref{eq:non_div} therefore gives
\begin{align}\label{eq:mid-Q-bound}
	|Q(f_R,f_R)(v)|	\leq	CR^Nf_R	(v)\qquad	\text{for all } v \in \Omega_R^{\rm mid}.
\end{align}
Recalling the identity~\eqref{Pi_identity} allows us to find the bound
\begin{align*}
\left 	|	\Pi_{v-v_*}^\perp	\lp \nabla\log f_R-\nabla_*\log f_{R*}\rp \right | \leq \left  | \Pi_{v-v_*}^\perp \lp \frac{v g_R+ \nabla g_R}{M + g_R} - \frac{v_* g_{R*} +\nabla_* g_{R*}}{M_* + g_{R*}} \rp \right| \leq C \lp |v| + R^N + |v_*|\rp,
\end{align*}
which in turn shows that $\delta_{f_R}$ is bounded by, after possibly increasing $N$,
\begin{align} \label{delta_mid_est}
	\delta_{f_R} = \int_{\R^3} 	f_{R*} \frac{	\left| 	\Pi_{v-v_*} ^\perp	\bigl(\nabla\log f_R-\nabla_*\log f_{R*}\bigr)\right|^2}{|v-v_*|} 	\d v_* \leq CR^N \quad \text{for all }v \in \Omega_R^{\rm mid}.
\end{align}
	Inserting~\eqref{mid_f_R_g_R}, \eqref{eq:mid-Q-bound} and~\eqref{delta_mid_est} into~\eqref{prop:mid_estimate} gives 
	\begin{align*}
			\int_{\Omega_R^{\rm mid}}	\lp	\frac{Q(f_R,f_R)^2}{f_R}	+|Q(f_R,f_R)|\,\delta_{f_R}	\rp \d v \leq 	C \int_{\Omega_R^{\rm mid}}	 R^{2N} f_R \d v \leq C \int_{\Omega_R^{\rm mid}}	 R^{2N} \exp \lp -c R^{\frac45}\rp\d v. 
	\end{align*}
Absorbing polynomial factors in the choices of $C, c>0$, we obtain the claim.
\end{proof}

\subsection{Estimates of the outer region contributions}

\begin{proposition}[Expansion of the collision operator]
	\label{prop:exp_collision}
	The collision operator has the expansion
	\begin{equation}\label{eq:outer_Q_exp}
		Q(f_R,f_R)(v)	=
		\bigl(Q_{R,1}+Q_{R,2}+Q_{R,3}\bigr)(v/R)	+
		\mathcal E_R^Q(v) \qquad\text{for all }v \in \Omega_R^{\rm out},
	\end{equation}
	where
	\begin{equation}\label{eq:out_leading_terms}
		Q_{R,1}	=R^{-K-6}\frac{\Pi^\perp_v}{|v|}:\nabla^2G,
		\qquad	Q_{R,2}	=\frac{R^{-K-8}}{|v|^3}
		\lp3\frac{v\otimes v}{|v|^2}-\Id	\rp:\nabla^2G,
		\qquad	Q_{R,3} 	=R^{-2K-6}Q(G,G).
	\end{equation}
	For every $N\geq0$, there exist constants $c_N,C_N>0$ such that, for all
	sufficiently large $R$,
	\begin{equation}\label{eq:out_error_est}
		\int_{\Omega_R^{\rm out}}\left(
		\frac{|\mathcal E_R^Q(v)|^2}{g_R(v)}
		+	|\mathcal E_R^Q(v)|\langle v\rangle^N	\right)\d v
		\leq
		C_N \exp \lp -c_NR^{6/5} \rp.
	\end{equation}
\end{proposition}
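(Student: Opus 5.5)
We start from the bilinear expansion \eqref{Q_f_R_expansion}, valid everywhere since $Q(M,M)=0$:
\begin{align*}
Q(f_R,f_R)=A[g_R]:\nabla^2 M + A[M]:\nabla^2 g_R + 16\pi M g_R + Q(g_R,g_R).
\end{align*}
On $\Omega_R^{\rm out}=\{|v|\ge R^{3/5}\}$ we have $M(v)\le C\exp(-\tfrac12 R^{6/5})$, and every derivative of $M$ up to order two carries the same factor times a polynomial. So the terms $A[g_R]:\nabla^2M$ and $16\pi Mg_R$ are placed directly into $\mathcal E_R^Q$. The last term scales exactly: $A[g_R](v)=R^{-K-1}A[G](v/R)$ and $\nabla^2 g_R(v)=R^{-K-5}(\nabla^2G)(v/R)$. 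Combined with the non-divergence form \eqref{eq:non_div}, this gives $Q(g_R,g_R)(v)=R^{-2K-6}Q(G,G)(v/R)$, which is $Q_{R,3}$, with no error.

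The main work is the term $A[M]:\nabla^2 g_R$. We expand $A[M](v)$ for $|v|$ large. We write
\begin{align*}
A[M](v)=\int \frac{\Pi^\perp_{v-w}}{|v-w|}M(w)\,dw
\end{align*}
and Taylor expand the kernel $k(z)=\Pi_z^\perp/|z|=(\Id-\hat z\otimes\hat z)/|z|$ around $z=v$ in $w$. The zeroth order gives $\Pi_v^\perp/|v|$. The first-order term vanishes because $M$ is centered. The second-order term gives $\tfrac12 \nabla^2 k(v):\int w\otimes w\,M=\tfrac12\Delta k(v)$. A direct computation of $\Delta(\Pi^\perp_v/|v|)$, which is homogeneous of degree $-3$, gives $|v|^{-3}(3\hat v\otimes\hat v-\Id)$. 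The third-order term vanishes by symmetry. The remainder is then $O(|v|^{-5})$ uniformly for $|v|\ge R^{3/5}$, after splitting $w$ into $|w|\le|v|/2$, where Taylor's formula applies, and $|w|>|v|/2$, where $M$ gives a Gaussian tail of size $\exp(-c|v|^2)$ and the singular integral $\int |v-w|^{-1}M$ is still bounded.

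Inserting the two leading terms and writing $v=Ry$ produces $R^{-1}\Pi_y^\perp/|y|\cdot R^{-K-5}\nabla^2G(y)=Q_{R,1}$ and $R^{-3}|y|^{-3}(3\hat y\otimes\hat y-\Id)\cdot R^{-K-5}\nabla^2 G=Q_{R,2}$. The exponents match \eqref{eq:out_leading_terms}.

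\textbf{Main obstacle.} The delicate point is the error estimate \eqref{eq:out_error_est}, in particular the weighted term $|\mathcal E^Q_R|^2/g_R$. The Taylor remainder $O(|v|^{-5})|\nabla^2 g_R|$ is only polynomially small, not $\exp(-cR^{6/5})$, so a naive bound fails. I would treat it as follows.

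First, the exponentially small part of the claim is meant to concern the genuinely Maxwellian-localized pieces. For the polynomial Taylor remainder I would check whether the claimed bound really requires it to be absorbed; if so, the expansion of $A[M]$ must be exact up to exponential errors.

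Indeed, for $|v|\ge R^{3/5}$ the exact formula for $A[M]$ of a radial Gaussian is available. It can be written as $a(|v|)\hat v\otimes\hat v+b(|v|)\Pi_v^\perp$, with $a,b$ given by error functions. Via Newton's theorem for radial densities, $a$ and $b$ equal exactly $2|v|^{-3}$ and $|v|^{-1}-|v|^{-3}$ up to $\exp(-c|v|^2)$ corrections. This is because the mass of $M$ outside the ball of radius $|v|$ is exponentially small. So $\int M\,k(v-w)\,dw$ agrees with the exact multipole truncation, since $k$ is biharmonic in suitable sense and only the monopole and quadrupole $\int w\otimes w\, M=\Id$ survive for radial data. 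I would verify this by using $\Pi_z^\perp/|z|=\Id\,\Delta|z|-\nabla^2|z|$ and the fact that $\Delta^2|z|=-8\pi\delta$. Convolving $|z|$ with a radial $M$ gives exactly $|v|+\tfrac{1}{|v|}$ plus exponentially small terms outside the support bulk (a mean-value property for $\Delta^2$), and differentiating yields the two terms exactly.

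With this, $\mathcal E^Q_R$ consists only of terms bounded by $C\exp(-cR^{6/5})\langle v\rangle^N(|\nabla^2 g_R|+g_R)$ and pure Maxwellian terms. Dividing the square by $g_R$ is harmless where $g_R$ is comparable to its derivatives, since $|\nabla^2 g_R|\le C R^{N}g_R$ on the annulus region. Where $g_R$ is extremely small, the Maxwellian factor $M^2/g_R\le \exp(-|v|^2+|v|^2/R^2+R^2/|v|^2)$ remains exponentially small for $|v|\ge R^{3/5}$. Integrating then gives \eqref{eq:out_error_est}.
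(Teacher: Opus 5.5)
Your argument is correct and is essentially the paper's proof. You use the same splitting \eqref{Q_f_R_expansion} and the exact scalings $\nabla^2 g_R(v)=R^{-K-5}\nabla^2G(v/R)$ and $Q(g_R,g_R)(v)=R^{-2K-6}Q(G,G)(v/R)$. You also get the same remainder $\mathcal E_R^Q=\mathcal E_A:\nabla^2 g_R+A[g_R]:\nabla^2 M+16\pi M g_R$ and the same pointwise bounds on $\Omega_R^{\rm out}$. The key input in both is an expansion of $A[M]$ with error $O(e^{-c|v|^2})$, not the $O(|v|^{-5})$ your Taylor argument gives.

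The only real difference is how you obtain that expansion. The paper (\cref{prop:core-diffusion-expansion}) writes $A[M]=\lambda^\parallel\hat v\otimes\hat v+\lambda^\perp\Pi_v^\perp$, takes $\lambda^\parallel$ from a formula in \cite{GG16}, and gets $\lambda^\perp$ from the trace identity and Newton's theorem. Your route uses $A[M]=\nabla^2(|\cdot|*M)$ together with the shell average $\frac{1}{4\pi s^2}\int_{|w|=s}|v-w|\,dS=|v|+\frac{s^2}{3|v|}$ for $s<|v|$. It is self-contained and gives $|\cdot|*M=|v|+|v|^{-1}+\psi(|v|)$, where $\psi(r)=\int_{s>r}h(s,r)M(s)4\pi s^2\,ds$ and $h(s,r)=s+\frac{r^2}{3s}-r-\frac{s^2}{3r}$. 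The cost is that you must differentiate $\psi$ twice. This works because $h$ and $\partial_r h$ vanish at $s=r$, so no boundary terms arise and $\psi',\psi''=O(e^{-cr^2})$ for $r\geq 1$; you should say this explicitly.

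One concrete slip needs fixing. The identity $\Pi_z^\perp/|z|=\Id\,\Delta|z|-\nabla^2|z|$ is false: its right-hand side equals $(\Id+\hat z\otimes\hat z)/|z|$. The correct and simpler identity is $\Pi_z^\perp/|z|=\nabla^2|z|$. Only with it does the Hessian of $|v|+|v|^{-1}$ give $\Pi_v^\perp/|v|+|v|^{-3}(3\hat v\otimes\hat v-\Id)$.

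There are also two minor points:
\begin{itemize}
\item $|\nabla^2 g_R|\le CR^N g_R$ is not uniform as $|v|\to\infty$. Use $C\langle v\rangle^2 g_R$ instead, which does hold on $\Omega_R^{\rm out}$.
\item $M^2/g_R$ carries an extra factor $CR^{K+3}$.
\end{itemize}
Both are absorbed by the Gaussian decay, so \eqref{eq:out_error_est} follows as you claim.
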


\begin{proof}
	By \eqref{eq:core-diffusion-expansion}, the diffusion matrix associated to the Maxwellian $M$ has the expansion
	\begin{align*}
		A[M](v) 	=
		\frac{1}{|v|}\Pi^\perp_v +\frac{1}{|v|^3}	\left(	3\frac{v\otimes v}{|v|^2}-\Id	\right) +	\mathcal E_A(v)
		\qquad \text{for $|v |>1$},
	\end{align*}
where $\mc E_A (v) \leq C \exp \lp -c|v|^2 \rp$. 
	Since $Q(M,M)=0$, the non-divergence form~\eqref{eq:non_div} gives
	\begin{align*}
			Q(f_R,f_R)	= A[M]:\nabla^2g_R +Q(g_R,g_R) +A[g_R]:\nabla^2M
		+16\pi Mg_R.
	\end{align*}
	Using the notation $y=v/R$, we can rewrite
	\begin{align*}
		\nabla^2g_R(v)	= R^{-K-5}\nabla^2G(y),	\qquad		Q(g_R,g_R)(v) =	R^{-2K-6}Q(G,G)(y).
	\end{align*}
Consequently,
\begin{align*}
	Q(f_R,f_R)(v)	=	\bigl(Q_{R,1}+Q_{R,2}+Q_{R,3}\bigr)(v/R)	+	\mathcal E_R^Q(v),
\end{align*}
where
	\begin{equation}\label{eq:explicit-Q-remainder}
		\mathcal E_R^Q	=
		\mathcal E_A:\nabla^2g_R	+A[g_R]:\nabla^2M+16\pi Mg_R.
	\end{equation}
	This proves the expansion.
	
	It remains to establish \eqref{eq:out_error_est}.  Computing explicit derivatives gives, for all $v\in \Omega_R^{\rm out}$,
	\begin{align*}
		M(v)\leq	Cg_R(v)\exp \lp -c|v|^2/R^2 \rp,
		\quad |\nabla^2g_R(v)|	\leq C\langle v\rangle^2g_R(v),
		\quad	|\nabla^2M(v)|	\leq
		C\langle v\rangle^2M(v).
	\end{align*}
	Using this and the uniform boundedness of $A[g_R]$ we get 
\begin{align*}
	\frac{|\mathcal E_A:\nabla^2g_R|^2}{g_R}
	&\leq
	C\langle v\rangle^4 \exp \lp -2c|v|^2 \rp g_R,\\
	\frac{|A[g_R]:\nabla^2M|^2}{g_R}
	&\leq
	C\langle v\rangle^4\frac{M^2}{g_R}
	\leq
	C\langle v\rangle^4M(v) \exp \lp -c|v|^2/R^2 \rp,\\
	\frac{|16\pi Mg_R|^2}{g_R}
	&\leq
	CM^2g_R.
\end{align*}
Taking the integrals of the the inequalities above yields
\begin{align*}
	\int_{\Omega_R^{\rm out}}	\frac{|\mathcal E_R^Q|^2}{g_R}\d v\leq	C \exp \lp -cR^\frac65 \rp, 
\end{align*}
and the Cauchy-Schwarz inequality reveals
	\begin{align*}
		\int_{\Omega_R^{\rm out}}
		|\mathcal E_R^Q(v)|\langle v\rangle^N\d v
		\leq
		\left(
		\int_{\Omega_R^{\rm out}}
		\frac{|\mathcal E_R^Q|^2}{g_R}\d v
		\right)^{1/2} 
		\left(
		\int_{\R^3}g_R(v)\langle v\rangle^{2N}\d v
		\right)^{1/2}.
	\end{align*}
	The second factor grows at most polynomially in $R$, and is therefore
	absorbed by the stretched exponential decay, upon possibly decreasing $c > 0$.  This proves~\eqref{eq:out_error_est}.
\end{proof}

\begin{proposition}
	\label{prop:out_delta_exp} The functional $\delta_{f_R}$ for $f_R$ defined in~\eqref{def:fR}  has the expansion
	\begin{equation}\label{eq:delta_exp}
		\delta_{f_R}(v)	=\lp \delta_{R,1} +\delta_{R,2} +\delta_{R,3} +  \delta_{R,4}\rp (v/R)  
		+	\mc E_R^\delta(v) \qquad \text{for all $ v \in \Omega_R^{\rm out}$},
	\end{equation}
where
\begin{align}\label{eq:out_delta_terms}
		\delta_{R,1}	=\frac{2}{R|\cdot|},
		\qquad	\delta_{R,2}	=\frac{8}{R^3|\cdot|}\beta,
		\qquad	\delta_{R,3} 	=\frac{8}{R^5|\cdot|}\beta^2, \qquad \delta_{R,4} = R^{-K-3}\delta_G ,
\end{align}
and $\beta (y) \coloneq |y|^{-4} -1$.

For every $N\geq0$, there exist constants $c,C_N>0$ such that, for all sufficiently large $R$,
	\begin{equation}\label{delta_gR_error}
		\int_{\Omega_R^{\rm out}}	g_R(v)\langle v \rangle^N
		\lp	|\mathcal E_R^\delta(v)|	+	|\mathcal E_R^\delta(v)|^2 \rp \d v
		\leq
		C_N \exp \lp -cR^{\frac45} \rp.
	\end{equation}
\end{proposition}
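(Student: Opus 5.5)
The approach is to split $\delta_{f_R}$ according to $f_{R*}=M_*+g_{R*}$. The $M_*$-part will produce $\delta_{R,1}+\delta_{R,2}+\delta_{R,3}$ through an exact algebraic identity. The $g_{R*}$-part will produce $\delta_{R,4}$ by scaling. Everything else goes into $\mathcal E^\delta_R$.

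\textbf{Step 1: the gradient at $v$.} For $v\in\Omega_R^{\rm out}$ we have $M(v)\le g_R(v)\exp(-c|v|^2)$. Since $\nabla\log G(y)=2y\beta(y)$ with $\beta(y)=|y|^{-4}-1$, this gives
\begin{align*}
\nabla\log f_R(v)=\nabla\log g_R(v)+e_1(v)=\frac{2\beta(v/R)}{R^2}\,v+e_1(v),\qquad |e_1(v)|\le C\langle v\rangle\exp(-c|v|^2).
\end{align*}

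\textbf{Step 2: regions in $v_*$.} I split the $v_*$-integration into $\{|v_*|\le R^{2/5}\}$, $\{R^{2/5}\le|v_*|\le R^{3/5}\}$ and $\{|v_*|\ge R^{3/5}\}$, as in \eqref{domain}.
\begin{itemize}
\item On the first set, \eqref{g_R_inner_der} gives $\nabla_*\log f_{R*}=-v_*+O(\exp(-cR^{6/5}))$.
\item On the third set, $\nabla_*\log f_{R*}=R^{-1}\nabla\log G(v_*/R)+O(\exp(-c|v_*|^2))$.
\item On the middle set, $f_{R*}\le C\exp(-cR^{4/5})$ and the log-gradients are polynomially bounded, as in the proof of \cref{prop:middle-contribution}. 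Its contribution, and the polynomially weighted cross errors, are therefore $O(R^N\exp(-cR^{4/5}))$ times a polynomial in $v$. The factor $|v-v_*|^{-1}$ is locally integrable, so it causes no difficulty.
\end{itemize}

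\textbf{Step 3: the $M_*$-part.} In the main term, the key algebraic observation is that, with $\alpha\coloneqq 2\beta(v/R)/R^2$,
\begin{align*}
\Pi^\perp_{v-v_*}\bigl(\alpha v+v_*\bigr)=\Pi^\perp_{v-v_*}\bigl(\alpha(v-v_*)+(1+\alpha)v_*\bigr)=(1+\alpha)\,\Pi^\perp_{v-v_*}v_* .
\end{align*}
Hence the $M_*$-contribution equals $(1+\alpha)^2\int M_*|\Pi^\perp_{v-v_*}v_*|^2|v-v_*|^{-1}\d v_*$, after re-extending the domain to all of $\R^3$ at cost $\exp(-cR^{4/5})$.

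To evaluate the remaining integral, substitute $z=v-v_*$ and use $\Pi^\perp_z v_*=\Pi^\perp_z v$. Then $|\Pi^\perp_z v|^2/|z|=v^T\nabla^2|z|\,v$, so the integral equals $v^T\nabla^2(|\cdot|*M)(v)\,v$.

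By Newton's theorem for $|\cdot|$, the spherical average of $|v-w|$ over $|w|=r<|v|$ equals $|v|+r^2/(3|v|)$. Combined with $\int|w|^2M=3$, this gives $|\cdot|*M=|v|+|v|^{-1}$ up to Gaussian tails, including their derivatives. Since $\nabla^2|v|\,v=0$ and $v^T\nabla^2|v|^{-1}v=2/|v|$, the integral is exactly $2/|v|$ up to $O(\exp(-c|v|^2))$.

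Expanding $(1+\alpha)^2\,2/|v|$ in $y=v/R$ produces precisely $\delta_{R,1}+\delta_{R,2}+\delta_{R,3}$.

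\textbf{Step 4: the $g_{R*}$-part.} On $\{|v_*|\ge R^{3/5}\}$, I substitute $v_*=Rw$. This gives $f_{R*}\to R^{-K-3}G(w)$, $\d v_*=R^3\d w$, $|v-v_*|^{-1}=R^{-1}|y-w|^{-1}$, and log-gradient differences $R^{-1}(\nabla\log G(y)-\nabla\log G(w))$. Altogether the contribution is $R^{-K-3}\delta_G(y)=\delta_{R,4}(y)$. Restoring the missing region $|w|\le R^{-2/5}$ costs only $\exp(-cR^{4/5})$, because $G$ is flat at the origin.

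\textbf{Step 5: collecting errors.} All error terms are bounded pointwise by $C\langle v\rangle^{N}\exp(-cR^{4/5})$, or by Gaussian factors $\exp(-c|v|^2)\le\exp(-cR^{6/5})$ in $v$. Integrating them against $g_R\langle v\rangle^N$, which has polynomially growing mass, yields \eqref{delta_gR_error}.

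\textbf{Main obstacle.} I expect the main difficulty to be justifying that the $M_*$-integral is exactly $2/|v|$ rather than $2/|v|+O(|v|^{-3})$. An $O(|v|^{-3})$ correction would compete with $\delta_{R,2}$ and $\delta_{R,3}$. Steps 3's projection identity together with the Newton-type formula for $|\cdot|*M$ is what rules such corrections out, so that step needs the most care.
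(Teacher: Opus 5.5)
Your proposal is correct and follows the paper's proof. It uses the same split $f_{R*}=M_*+g_{R*}$, the same projection identity reducing the $M_*$-part to $(1+\alpha)^2A[M]:(v\otimes v)$ with $A[M]:(v\otimes v)=2/|v|+O(e^{-c|v|^2})$, and the same rescaling of the $g_{R*}$-part to $R^{-K-3}\delta_G(v/R)$. The differences are only in bookkeeping: you localize the $v_*$-integral to the three shells instead of writing global remainders $\eta^{M}_{R*},\eta^{g_R}_{R*}$ and controlling them by Cauchy--Schwarz, and you derive the contraction identity from $A[M]=\nabla^2(|\cdot|*M)$ and Newton's theorem rather than from the formula for $\lambda^\parallel_M$ quoted in the appendix.
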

\begin{proof}
		We split $\delta_{f_R}$ as
	\begin{align*}
		\delta_{f_R} &\coloneqq	\int_{\R^3}M_*
		\frac{\left|	\Pi^\perp_{v-v_*}	\lp \nabla\log f_R-\nabla_*\log f_{R*}\rp	\right|^2	}{|v-v_*|}	\d v_* + 	\int_{\R^3}g_{R*}
		\frac{\left|	\Pi^\perp_{v-v_*}	\lp \nabla\log f_R-\nabla_*\log f_{R*}\rp	\right|^2	}{|v-v_*|}	\d v_* \\ &\eqcolon\delta_{f_R}^M + \delta_{f_R}^{g_R}.
	\end{align*}
\textbf{Step 1: Expansion of $\delta_{f_R}^M $.} We perform an asymmetric expansion of the logarithms of $f_R$,
\begin{align*}
\nabla_*  \log f_{R*}  = \nabla_*\log M_* + \eta_{R*}^M , \qquad 	\nabla  \log f_R  = \nabla \log  g_R + \eta_R^{g_R},
\end{align*}
where $\eta_{R*}^M , \eta_R^{g_R}$ are error functions whose contribution we estimate below. 
Using this, we expand $\delta_{f_R}^M$,
\begin{align} \label{delta_M_exp}
	\delta_{f_R}^M = & \int_{\R^3}M_*\frac{	\left| \Pi_{v-v_*}^\perp\lp 	\nabla\log g_R	-\nabla_*\log M_*	\rp	\right|^2}{|v-v_*|}\d v_* \notag \\
	+	&2\int_{\R^3}M_*\frac{\Pi_{v-v_*}^\perp \lp	\nabla\log g_R	-\nabla_* \log M_* \rp}{|v-v_*|}
	\cdot \Pi_{v-v_*}^\perp	\lp
	\eta_R^{g_R}-\eta_{R*}^M \rp	\d v_*\\
	+&	\int_{\R^3}M_*	\frac{	\left|	\Pi_{v-v_*}^\perp\lp
		\eta_R^{g_R}-\eta_{R*}^M \rp	\right|^2	}{|v-v_*|}	\d v_*. \notag
\end{align}
For the first term we observe that 
\begin{align} \label{g_RM*_identity1}
	\Pi_{v-v_*}^\perp \lp \nabla\log g_R	-\nabla_*\log M_* \rp
	=\lp 	1	+	\frac{2\beta(v/R)}{R^2} \rp 	\Pi_{v-v_*}^\perp v.
\end{align}
Hence the first integral in~\eqref{delta_M_exp} is equal to 
\begin{align*}
	\int_{\R^3}M_*\frac{	\left| \Pi_{v-v_*}^\perp\lp 	\nabla\log g_R	-\nabla_*\log M_*	\rp	\right|^2}{|v-v_*|}\d v_* = \lp 	1	+	\frac{2\beta(v/R)}{R^2} \rp^2	A[M] : v \otimes v.
\end{align*}
We recall that $A[M]$ can be expanded as
\begin{align*}
	A[M](v) 	=
\frac{1}{|v|}\Pi^\perp_v +\frac{1}{|v|^3}	\left(	3\frac{v\otimes v}{|v|^2}-\Id	\right) +	\mathcal E_A(v)
\qquad \text{for $|v |>1$},
\end{align*}
In conclusion, we obtain
\begin{align}\label{eq:decomposition-delta-fM-with-error}
	\delta_{f_R}^M  = \lp \frac{2}{R|\cdot|} +\frac{8}{R^3|\cdot|}\beta +\frac{8}{R^5|\cdot|}\beta^2\rp (v/R) + \lp 	1	+	\frac{2\beta(v/R)}{R^2} \rp^2 \mc E_A(v) : v \otimes v + \mathcal R^M_R(v),
\end{align}
where
\begin{align*}
	\mathcal R_R^M(v)
	\coloneqq{}
	2&\int_{\R^3}M_*
	\frac{
		\Pi^\perp_{v-v_*}
		\lp \nabla\log g_R-\nabla_*\log M_* \rp
	}{|v-v_*|}
	\cdot
	\Pi^\perp_{v-v_*}
	\lp \eta_R^{g_R}-\eta_{R*}^M \rp
	\d v_*
	\\
	+
	&\int_{\R^3}M_*
	\frac{
		\left|
		\Pi^\perp_{v-v_*}
		\lp \eta_R^{g_R}-\eta_{R*}^M \rp
		\right|^2
	}{|v-v_*|}
	\,\d v_* .
\end{align*}

Since $|v/R| \geq R^{-\frac25}$ in the outer region, we know that 
\begin{align}\label{g_RM*_identity2}
	1	+	\frac{2\beta(v/R)}{R^2}  \leq 1 + \frac{2 (R^\frac85 -1)}{R^2} \leq C
\end{align}
is bounded. Hence the second term of~\eqref{eq:decomposition-delta-fM-with-error} satisfies the error bound~\eqref{delta_gR_error} recalling that $\mc E_A (v) \leq C \exp \lp -c|v|^2 \rp$. 
To continue further, we first observe that 
\begin{align*}
	\eta_R^{g_R} = \frac{M}{f_R} \lp \nabla \log M - \nabla \log g_R\rp, \qquad \eta_{R_*}^{M} = \frac{g_{R*}}{f_{R*}} \lp \nabla \log g_{R*} - \nabla_* \log M_*\rp.
\end{align*}
We can then use this to estimate, for $v \in \Omega_R^{\rm out}$, 
\begin{align*}
	|	\eta_R^{g_R}| \leq C  \exp \lp -c|v|^2 \rp,
\end{align*}
while for the second error function
\[M_*|\eta_{R*}^M|^2 \le C \min\{M_*, g_{R*}\}(|v_*|^2 + R^4|v_*|^{-6}),\]
which yields, for $v_* \in \Omega_R^{\rm out}$, using the Maxwellian bound,
\begin{align*}
	M_*|\eta_{R*}^M|^2
	\leq
	C\exp\lp-\frac{|v_*|^2}{2}\rp
	\lp |v_*|^2+R^4|v_*|^{-6}\rp
	\leq C\exp(-c|v_*|^2).
\end{align*}

We claim that the third term of~\eqref{delta_gR_exp} satisfies the error bound in~\eqref{delta_gR_error}.
It can be bounded by
\begin{align}\label{error_step1}
	|\mathcal R_{R}^M(v)|
	\leq{}&
	\int_{\R^3} M_*
	\frac{
		\left|
		\Pi_{v-v_*}^\perp
		\lp \eta_R^{g_R}-\eta_{R*}^M \rp
		\right|^2
	}{|v-v_*|}
	\d v_*
	\notag\\
	&+
	2\lp
	\int_{\R^3} M_*
	\frac{
		\left|
		\Pi_{v-v_*}^\perp
		\lp \eta_R^{g_R}-\eta_{R*}^M \rp
		\right|^2
	}{|v-v_*|}
	\d v_*
	\rp^{\frac12}
	\notag\\
	&\hspace{1cm}\times
	\lp
	\int_{\R^3}M_*
	\frac{
		\left|
		\Pi_{v-v_*}^\perp
		\lp \nabla\log g_R-\nabla_*\log M_* \rp
		\right|^2
	}{|v-v_*|}
	\d v_*
	\rp^{\frac12}.
\end{align}
We obtain, for $v\in\Omega_R^{\rm out}$,
\begin{align*}
	\int_{\R^3}M_*
	&\frac{
		\left|
		\Pi_{v-v_*}^\perp
		\lp \eta_R^{g_R}-\eta_{R*}^M\rp
		\right|^2
	}{|v-v_*|}
	\d v_* \leq
	2|\eta_R^{g_R}(v)|^2
	\int_{\R^3}\frac{M_*}{|v-v_*|}\d v_*
	+
	2\int_{\R^3}
	\frac{M_*|\eta_{R*}^M|^2}{|v-v_*|}\d v_*
	\\
    & \leq C\exp\lp-cR^\frac{6}{5}\rp 
    + 
    2\int_{\left\{|v_*| < R^\frac35\right\}} \frac{M_*|\eta_{R*}^M|^2}{|v-v_*|}\d v_* \\
    & \leq C\exp\lp-cR^\frac{6}{5}\rp  
	+
	CR^{-K-2}\int_{\left\{|v_*| < R^\frac35\right\}} \frac{G(v_*/R)}{|v/R-v_*/R|}\left(\left|\frac{v_*}{R}\right|^2 + \left|\frac{v_*}{R}\right|^{-6}\right) \d v_* \\
    & \leq C\exp\lp-cR^\frac{6}{5}\rp +  C R^{-K+1}  \exp \lp -\frac12 R^\frac45 \rp \int_{ \left \{|v_*| < R^{-\frac25} \right \}}  \frac{ e^{-\frac12 |v_*|^{-2}}}{|v/R-v_*|}  \lp |v_*|^2 + |v_*|^{-6}\rp \d v_* \\
	& \leq
	C\exp\lp-cR^\frac45\rp,
	\numberthis \label{error_step11}
\end{align*}
and using~\eqref{g_RM*_identity1} together with~\eqref{g_RM*_identity2} reveals
\begin{align*}
	\int_{\R^3}M_*
	\frac{
		\left|
		\Pi_{v-v_*}^\perp
		\lp \nabla\log g_R-\nabla_*\log M_* \rp
		\right|^2
	}{|v-v_*|}
	\d v_* 
    \le C\int_{\R^3} \frac{M_*|v_*|^2}{|v-v_*|} \d v_* \le C. \numberthis\label{error_step12}
\end{align*}
Inserting the estimates \eqref{error_step11} and \eqref{error_step12} into \eqref{error_step1} proves the claim.

\medskip 
\noindent
\textbf{Step 2: Expansion of $\delta_{f_R}^{g_R} $.} 
This time we expand the logarithms of $f_R$ symmetrically,
\begin{align*}
	\nabla_*  \log f_{R*}  = \nabla_*\log g_{R*} + \eta_{R*}^{g_R}, \qquad 	\nabla  \log f_R  = \nabla \log  g_R + \eta_R^{g_R},
\end{align*} 
and use these to expand $\delta_{f_R}^{g_R} $
\begin{align}	\label{delta_gR_exp}
	\delta_{f_R}^{g_R} = & \int_{\R^3}g_{R*}\frac{	\left| \Pi_{v-v_*}^\perp\lp 	\nabla\log g_R	-\nabla_*\log g_{R*}	\rp	\right|^2}{|v-v_*|}\d v_* \notag \\
	+	&2\int_{\R^3}g_{R*}\frac{\Pi_{v-v_*}^\perp \lp	\nabla\log g_R	-\nabla_* \log g_{R*} \rp}{|v-v_*|}
	\cdot \Pi_{v-v_*}^\perp	\lp
	\eta_R^{g_R}-\eta_{R*}^{g_R} \rp	\d v_*\\
	+&	\int_{\R^3} g_{R*}	\frac{	\left|	\Pi_{v-v_*}^\perp\lp
		\eta_R^{g_R}-\eta_{R*}^{g_R}  \rp	\right|^2	}{|v-v_*|}	\d v_*. \notag
\end{align}
The first integral in~\eqref{delta_gR_exp} can be simplified as 
\begin{align*}
	\int_{\R^3}g_{R*}\frac{	\left| \Pi_{v-v_*}^\perp\lp 	\nabla\log g_R	-\nabla_*\log g_{R*}	\rp	\right|^2}{|v-v_*|}\d v_* = R^{-K-3} \delta_G (v/R).
\end{align*}
It remains to show that two remaining error terms in~\eqref{delta_gR_exp} satisfy the error bound~\eqref{delta_gR_error}. They can be bounded by
\begin{align}\label{error_step2}
			2 \int_{\R^3} \left | g_{R*}\frac{\Pi_{v-v_*}^\perp \lp	\nabla\log g_R	-\nabla_* \log g_{R*} \rp}{|v-v_*|}
	\cdot \Pi_{v-v_*}^\perp	\lp
	\eta_R^{g_R}-\eta_{R*}^{g_R} \rp \right |	\d v_*
	+	\int_{\R^3}  g_{R*}	\frac{	\left|	\Pi_{v-v_*}^\perp\lp
		\eta_R^{g_R}-\eta_{R*}^{g_R}  \rp	\right|^2	}{|v-v_*|}	\d v_* \notag\\
	\leq 	
	\int_{\R^3}   g_{R*}	\frac{	\left|	\Pi_{v-v_*}^\perp\lp
		\eta_R^{g_R}-\eta_{R*}^{g_R}  \rp	\right|^2	}{|v-v_*|}\d v_* +   2 \lp \int_{\R^3} g_{R*}	\frac{	\left|	\Pi_{v-v_*}^\perp\lp
		\eta_R^{g_R}-\eta_{R*}^{g_R}  \rp	\right|^2	}{|v-v_*|}	\d v_*\rp^\frac12 \lp R^{-K-3} \delta_G (v/R)\rp^\frac12.
\end{align}
We obtain, for $v \in \Omega_R^{\rm out}$, 
\begin{align}
	\int_{\R^3}   g_{R*}	& \frac{	\left|	\Pi_{v-v_*}^\perp\lp
		\eta_R^{g_R}-\eta_{R*}^{g_R}  \rp	\right|^2	}{|v-v_*|} \d v_* \leq 2R^{-K-3}\int_{\R^3}   G(v_*/R)  \frac{\lp 
		\eta_R^{g_R} \rp^2+ \lp \eta_{R*}^{g_R}  \rp^2}{|v-v_*|}\d v_* \notag
	\\ &\leq C  \exp \lp -cR^\frac65 \rp	\int_{\R^3}  \frac{ G(v_*/R) }{|v-v_*|} \d v_* + \int_{ \left \{|v_*| < R^\frac35 \right \}}  G(v_*/R)  \frac{\lp \eta_{R*}^{g_R}  \rp^2}{|v-v_*|} \d v_*  \notag
	\\ &\leq C \exp \lp -cR^\frac65 \rp  + C R^{-1} 	\int_{ \left \{|v_*| < R^\frac35 \right \}}  \frac{ G(v_*/R) }{|v/R-v_*/R|}   \lp   \left | \frac{v_*}{R}\right |^2 +  \left | \frac{v_*}{R}\right |^{-6}\rp  \d v_* \notag
	\\& \leq  C \exp \lp -cR^\frac65 \rp  + C R^{2}  \exp \lp -\frac12 R^\frac45 \rp \int_{ \left \{|v_*| < R^{-\frac25} \right \}}  \frac{ e^{-\frac12 |v_*|^{-2}}}{|v/R-v_*|}  \lp |v_*|^2 + |v_*|^{-6}\rp   \d v_* \notag
	\\ &\leq C \exp \lp -cR^\frac45 \rp.\label{error_step21}
\end{align}
By a similar computation as above, one gets
\begin{align}\label{delta_G_est}
	\delta_G (y) \leq C \lp \langle y \rangle^2 + |y|^{-6}\rp.
\end{align}

Inserting the estimates~\eqref{error_step21} and~\eqref{delta_G_est} into~\eqref{error_step2} allows us to conclude the error terms satisfy the bound~\eqref{delta_gR_error}.
\end{proof}

\begin{proposition}[Leading order contribution]  \label{prop:out-contribution}
	For all sufficiently large $R$,
	\begin{equation} \label{prop:out_estimate}
		\int_{\Omega_R^{\rm out}}	\lp	-2\frac{Q(f_R,f_R)^2}{f_R}	+ Q(f_R,f_R)\,\delta_{f_R}	\rp \d v	= \frac83 R^{-K-4}  + O (R^{-K-6}).
	\end{equation}
\end{proposition}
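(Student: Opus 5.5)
Insert the expansions of \cref{prop:exp_collision} and \cref{prop:out_delta_exp} into the integrand over $\Omega_R^{\rm out}$ and change variables $v=Ry$, so $\d v=R^3\d y$ and $\Omega_R^{\rm out}$ becomes $\{|y|\ge R^{-2/5}\}$. The errors $\mathcal E_R^Q$ and $\mathcal E_R^\delta$ are handled by Cauchy--Schwarz. On $\Omega_R^{\rm out}$ we have $f_R\ge g_R$. Products $\mathcal E_R^Q\cdot(\text{main})$ and $(\text{main})\cdot\mathcal E_R^\delta$ are bounded using \eqref{eq:out_error_est} and \eqref{delta_gR_error}, together with the bounds $|Q_{R,i}|\le C g_R\langle v\rangle^N$ (polynomial in $v$ and $R$) and $\delta_{R,j}\le C\langle v\rangle^N$ from \eqref{delta_G_est}. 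These contribute $O(e^{-cR^{4/5}})$. In the term $Q^2/f_R$ I would replace $1/f_R$ by $1/g_R$. The difference is $M/(f_Rg_R)$, and since $M\le Cg_Re^{-c|v|^2/R^2}$ on $\Omega_R^{\rm out}$, this error is exponentially small as well.

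What remains are explicit integrals in $y$. The dominant contribution is $\int Q_{R,1}\delta_{R,1}$. It is of size $R^3\cdot R^{-K-6}\cdot R^{-1}=R^{-K-4}$ and equals
\[R^{-K-4}\int \frac{2}{|y|^2}\,\Pi_y^\perp:\nabla^2G\,\d y .\]
For radial $G=G(r)$ we have $\Pi^\perp_y:\nabla^2G=2G'(r)/r$. The integral then reduces to $16\pi\int_0^\infty G'(r)\,r^{-1}\d r$, which has to be evaluated (via integration by parts, $=16\pi\int G r^{-2}\d r=4\int G|y|^{-4}\d y$, etc.). Here I notice the sign could be a problem, since $G'$ changes sign. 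The statement gives the constant $\tfrac83$, so I expect the combination of all terms at order $R^{-K-4}$ to produce it. The next step is therefore to collect every pairing of that order.

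The other pairings are of lower order. The terms $-2\int Q_{R,1}^2/g_R$ and $\int Q_{R,1}\delta_{R,4}$ are $O(R^{3-2K-12+K+3})=O(R^{-K-6})$. The terms $\int Q_{R,3}\delta_{R,1}$ and $\int Q_{R,3}\delta_{R,4}$ are $O(R^{-2K-4})$, which is $o(R^{-K-6})$ when $K>2$. The terms involving $Q_{R,2}$, $\delta_{R,2}$ and $\delta_{R,3}$ are suppressed by at least $R^{-2}$. For each of these I must check integrability near $y=0$. This holds because $G$ and all its derivatives vanish faster than any power there, so the negative powers $|y|^{-1}$, $\beta$, $\delta_G\lesssim|y|^{-6}$ cause no difficulty. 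Extending the domain from $\{|y|\ge R^{-2/5}\}$ to all of $\R^3$ costs only $O(e^{-cR^{4/5}})$.

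The main obstacle is pinning down the exact leading constant and its positive sign. It will come from the explicit radial integral $\int|y|^{-2}\Delta_{\rm tan}G$, possibly combined with another piece of the same order. I would compute it by writing $G'(r)=(-2r+2r^{-3})G$, which gives $\int G'r^{-1}\d r=\int(2r^{-4}-2)G\,\d r$. This would then be matched to $\tfrac83$ by relating $\int G|y|^{-4}$ and $\int G$ through the normalization $Z$ and integration by parts identities for $e^{-r^2-r^{-2}}$. These include $\int_0^\infty e^{-r^2-r^{-2}}\d r=\int_0^\infty r^{-2}e^{-r^2-r^{-2}}\d r$, which comes from the substitution $r\mapsto1/r$. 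If the identity does not close directly, I would check whether $\delta_{R,2}$ paired with $Q_{R,1}$, or $Q_{R,2}$ paired with $\delta_{R,1}$, mistakenly also sits at order $R^{-K-4}$, and include it.
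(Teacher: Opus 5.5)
Your approach is the paper's. You insert the expansions of \cref{prop:exp_collision} and \cref{prop:out_delta_exp}, and you dispose of every product containing $\mathcal E_R^Q$ or $\mathcal E_R^\delta$ by Cauchy--Schwarz against \eqref{eq:out_error_est} and \eqref{delta_gR_error}. You then bound $\int Q^2/f_R$ through $f_R\ge g_R$ by $O(R^{-K-6})$, and count powers of $R$ for the remaining pairings, which leaves $\int Q_{R,1}\delta_{R,1}$ as the only term at order $R^{-K-4}$. All of this is sound. Replacing $1/f_R$ by $1/g_R$ exactly is unnecessary, since only the upper bound is used.

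The gap is the one step that actually fixes the claimed constant: you never evaluate $16\pi\int_0^\infty G'(r)r^{-1}\d r$, and the fallback you describe would not help. There is no sign issue, because your own integration by parts already turns the integral into $4\int_{\R^3}G|y|^{-4}\d y>0$. There is also no second contribution at order $R^{-K-4}$ to collect. By your own scaling count, $Q_{R,1}\delta_{R,2}$ and $Q_{R,2}\delta_{R,1}$ are both $O(R^{-K-6})$, so a mismatch with $\tfrac83$ would mean the statement is false, not that a term was overlooked.

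The computation closes directly. Put $J_p=\int_0^\infty r^pe^{-r^2-r^{-2}}\d r$, so that $Z=4\pi J_2$ and $\int_{\R^3}G|y|^{-4}\d y=J_{-2}/J_2$. The substitution $r\mapsto1/r$ gives $J_p=J_{-p-2}$. The paper (\cref{lem:prop_G}) uses $J_0=J_{-2}$ together with $\int_0^\infty\frac{\d}{\d r}\bigl(re^{-r^2-r^{-2}}\bigr)\d r=0$, i.e.\ $J_0-2J_2+2J_{-2}=0$. This gives $3J_{-2}=2J_2$, hence $\int G|y|^{-4}\d y=\tfrac23$, and the leading term is $4\cdot\tfrac23R^{-K-4}=\tfrac83R^{-K-4}$.

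Your own two formulas give the same result. Integration by parts yields $16\pi\int_0^\infty G'r^{-1}\d r=4J_{-2}/J_2$. On the other hand, $G'=(-2r+2r^{-3})G$ together with $J_{-4}=J_2$ and $J_0=J_{-2}$ yields $\tfrac{16\pi}{Z}(2J_{-4}-2J_0)=8(1-J_{-2}/J_2)$. Equating the two forces $J_{-2}/J_2=\tfrac23$.
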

\begin{proof}
	The leading order contribution is given by $Q(f_R, f_R) \delta_{f_R}$. We begin with the error bound for the other term.
	
	\medskip
	\noindent
	\textbf{Step 1: Contribution of $Q(f_R,f_R)^2$.}  We use the decomposition in~\cref{prop:exp_collision} to obtain
	\begin{align*}
		\int_{\Omega_R^{\rm out}}	\frac{Q(f_R,f_R)^2}{f_R}\d v \leq  	C\sum_{i=1}^3\int_{\Omega_R^{\rm out}}	\frac{Q_{R, i}(v/R)^2}{f_R}\d v + \int_{\Omega_R^{\rm out}} \frac{| \mathcal E_R^Q(v) |^2}{f_R}\d v
	\end{align*}
    Explicit forms of $Q_{R,i}$ in~\eqref{eq:out_leading_terms} and the error bound in~\eqref{eq:out_error_est} show that
	\begin{align*}
	   \int_{\Omega_R^{\rm out}}	\frac{Q(f_R,f_R)^2}{f_R}\d v \leq CR^{-K-6}.
    \end{align*}
    \textbf{Step 2: Contribution of $Q(f_R,f_R) \delta_{f_R}$.}  We combine the expansions of $Q(f_R,f_R)$ and $\delta_{f_R}$ to rewrite 
    \begin{align*}
	   \int_{\Omega_R^{\rm out}} Q(f_R,f_R)\,\delta_{f_R} \d v  = 	\int_{\Omega_R^{\rm out}} \lp Q_{R,1} \delta_{R,1}  \rp (v/R) \d v + \sum_{\substack{i \in \{1,2,3\},  \\j \in \{1,2,3,4\}, \\ (i,j) \neq (1,1)}} \int_{\Omega_R^{\rm out}} \lp Q_{R,i} \delta_{R,j}  \rp (v/R)\d v \\
      +\sum_{j=1}^4 \int_{\Omega_R^{\rm out}}
    	\mathcal E_R^Q(v)
    	\delta_{R,j}(v/R)
    	\d v
    	+
    	\sum_{i=1}^3\int_{\Omega_R^{\rm out}}
    	Q_{R,i}(v/R) \mathcal E_R^\delta(v)
    	\d v
    	+ 
    	\int_{\Omega_R^{\rm out}}
    	\mathcal E_R^Q(v)\mathcal E_R^\delta(v)
    	\d v.
    \end{align*}
	The first term is given by 
	\begin{align*}
		\int_{\Omega_R^{\rm out}}
		\lp Q_{R,1} \delta_{R,1}  \rp (v/R) \d v &=   2R^{-K-7}	\int_{\Omega_R^{\rm out}} \frac{\Pi^\perp_v}{|v/R|^2}:\nabla^2G(v/R) \d v \\&= 2 R^{-K-4} \int_{ \left \{|v|> R^{-\frac25} \right \}}\frac{\Pi^\perp_v}{|v|^2}:\nabla^2G(v) \d v.
	\end{align*}
	By~\cref{lem:prop_G}, we obtain the leading order term 
	\begin{align*}
		\int_{\Omega_R^{\rm out}}
		\lp Q_{R,1} \delta_{R,1}  \rp (v/R) \d v = \frac{8}{3}R^{-K-4}	+O \lp \exp \lp -cR^\frac45 \rp \rp.
	\end{align*}
	Since $K>2$, all remaining terms are of higher order. More precisely, 
    \begin{align*}
    	\sum_{\substack{i \in \{1,2,3\},  \\ j \in \{1,2,3,4\}, \\(i,j) \neq (1,1)}} \int_{\Omega_R^{\rm out}} \lp Q_{R,i} \delta_{R,j}  \rp (v/R)\d v = O (R^{-K-6}).
    \end{align*}
Moreover, using the error estimates~\eqref{eq:out_error_est} and~\eqref{delta_gR_error}, together with the Cauchy--Schwarz inequality, all products containing at least one error term satisfy
    \begin{align*}
    	\int_{\Omega_R^{\rm out}}  \sum_{j=1}^4 
    	\left|
    	\mathcal E_R^Q(v)
    	\delta_{R,j}(v/R)
    	\right|
    	+
    	\sum_{i=1}^3
    	\left| Q_{R,i}(v/R)
    	\mathcal E_R^\delta(v)
    	\right|
    	+
    	\left|
    	\mathcal E_R^Q(v)\mathcal E_R^\delta(v)
    	\right| \d v =
    	O\lp\exp\lp-cR^{\frac45}\rp\rp.
    \end{align*}
\end{proof}

\subsection{Proof of~\cref{thm:counter_example}}
\label{sec:proof_end}
\begin{proof}
	We split the exact identity \eqref{eq:exact-derivative-Q-form} over the three
	regions $\Omega_R^{\rm in}$, $\Omega_R^{\rm mid}$, and
	$\Omega_R^{\rm out}$
	\begin{align*}
		\frac{\d}{\d t}D_H(f_R(t)) 	  =& \int_{\Omega_R^{\rm in}}Q(f_R,f_R)\,\delta_{f_R}\d v	-2\int_{\Omega_R^{\rm in}}\frac{Q(f_R,f_R)^2}{f_R}\d v \\ &+ \int_{\Omega_R^{\rm mid}}Q(f_R,f_R)\,\delta_{f_R}\d v	-2\int_{\Omega_R^{\rm mid}}\frac{Q(f_R,f_R)^2}{f_R}\d v  \\&+ \int_{\Omega_R^{\rm out}}Q(f_R,f_R)\,\delta_{f_R}\d v	-2\int_{\Omega_R^{\rm out}}\frac{Q(f_R,f_R)^2}{f_R}\d v.
	\end{align*}
We apply~\cref{prop:inner-contribution} to the first line,~\cref{prop:middle-contribution} to the second line and~\cref{prop:out-contribution} to the last line to obtain
\begin{align*}
		\left.\frac{\d}{\d t}D_H(f(t))\right|_{t=0}
	=\frac{8}{3}R^{-K-4}	+O(R^{-K-6})	+O(R^{-2K-2}) + O \lp \exp \lp -cR^\frac45 \rp \rp = \frac{8}{3}R^{-K-4} + o (R^{-K-4}),
\end{align*}
where in the last step we used $K>2$. 

 The derivative is thus	strictly positive for all sufficiently large $R$.  By continuity along the
	classical solution, $D_H(f_R(t))$ is strictly increasing for sufficiently
	small positive times.
\end{proof}

	\section*{Acknowledgements}
	J.Junn\'e is supported by the French National Research Agency (ANR) through the project KEN (ANR-22-CE40-0016) \href{https://anr.fr/Projet-ANR-22-CE40-0016}{\includegraphics[height=\fontcharht\font`\B]{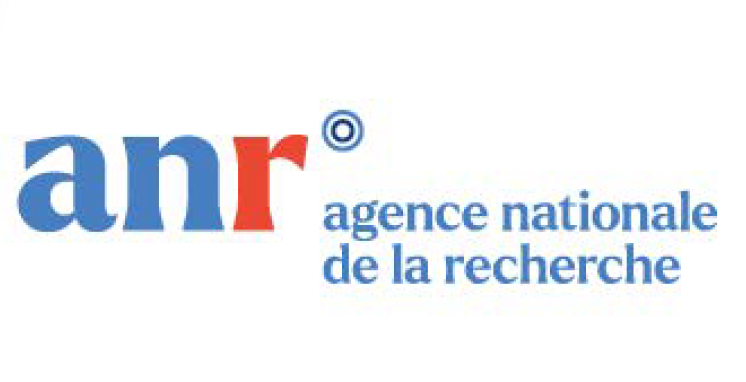}}. H. Yolda\c{s} is supported by the Dutch Research Council (NWO) \href{https://www.nwo.nl/en/projects/viveni222288}{\includegraphics[height=\fontcharht\font`\B]{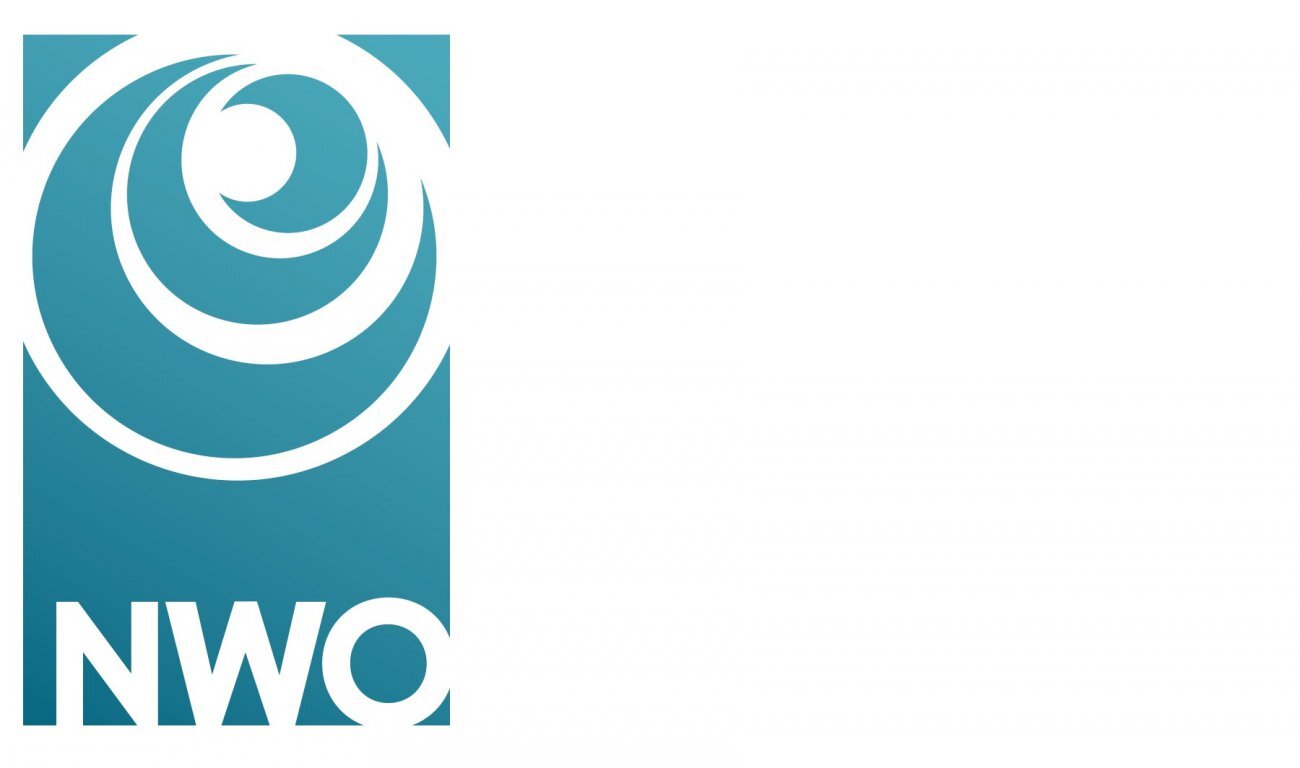}} \hspace{-10pt} under the NWO-Talent Programme Veni ENW project MetaMathBio -- VI.Veni.222.288.

	\section*{Declarations}
	
	\paragraph*{Data availability.} Data sharing is not applicable to this article as no datasets were generated or analyzed during the study.  
	\paragraph*{Conflict of interest.}
	The authors declare no conflicts of interest regarding this manuscript.
	\medskip 
	
	\noindent For the purpose of open access, the authors have applied a Creative Commons Attribution (CC-BY) licence to any Author Accepted Manuscript version arising from this submission.
	
	\paragraph*{AI Disclosure.} The specific sequence $f_R$ used as a counterexample was found with the help of AI. The proof given by AI was rather convoluted and has been rewritten by the authors. The authors are responsible for the mathematical arguments and the content of the paper. 
	
	\appendix
	\section{Appendix}\label{sec:appendixA}
	
	\begin{proposition}[Diffusion matrix at equilibrium]
		\label{prop:core-diffusion-expansion}
		There exist constants $c,C>0$ such that,
		\begin{equation}
			\label{eq:core-diffusion-expansion}
			A[M](v) 	=
			\frac{1}{|v|}\Pi^\perp_v +\frac{1}{|v|^3}	\left(	3\frac{v\otimes v}{|v|^2}-\Id	\right) +	\mathcal E_A(v)
			\qquad \text{for $|v |>1$},
		\end{equation}	where
	\begin{align*}
			|\mathcal E_A(v)|	\leq	C \exp \lp -c|v|^2 \rp.
	\end{align*}
	In particular,
		\begin{equation}
			\label{eq:core-radial-contraction}
			A[M](v) :(v\otimes v)	=	\frac{2}{|v|}+ 	O  \lp \exp \lp -c|v|^2\rp \rp.
		\end{equation}
	\end{proposition}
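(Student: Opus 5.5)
The plan is to compute $A[M]$ explicitly by exploiting radial symmetry. Since $M$ is radial, $A[M](v)$ is equivariant under rotations, so it decomposes as
\[
A[M](v)=a(|v|)\,\frac{v\otimes v}{|v|^2}+b(|v|)\,\Pi_v^\perp .
\]
The coefficients are then recovered from two scalar quantities: the trace $\operatorname{tr}A[M]=2a/2\cdot 0+\dots$, or more directly $\operatorname{tr}A[M]=a+2b$, and the contraction $A[M]:(v\otimes v)=a|v|^2$. Both can be written as potentials of $M$. Indeed $\operatorname{tr}\Pi^\perp_{z}=2$, so
\[
\operatorname{tr}A[M]=2\,(|\cdot|^{-1}*M).
\]
For the contraction, $\Pi^\perp_{v-w}v=\Pi^\perp_{v-w}w$, and this gives $A[M]:(v\otimes v)=A[M]:(w\otimes w)$-type expressions, i.e. $\int |v-w|^{-1}\bigl(|w|^2-((v-w)\cdot w)^2/|v-w|^2\bigr)M(w)\d w$. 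A cleaner route is to use the classical identity $A[f]=\nabla^2(|\cdot|*f)$, up to the normalization $\Pi^\perp_z/|z|=\nabla^2|z|$. Then $A[M]=\nabla^2\phi$ with $\phi=|\cdot|*M$ radial.

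The key step is computing $\phi$ for $|v|>1$, up to Gaussian errors. By Newton's theorem for radial densities, both $|\cdot|^{-1}*M$ and $|\cdot|*M$ can be evaluated shell by shell. For a unit-mass uniform measure on the sphere of radius $s$, the average of $|v-w|$ equals $|v|+s^2/(3|v|)$ when $|v|\ge s$. Integrating against $M$ gives
\[
\phi(v)=|v|+\frac{\int|w|^2M}{3|v|}+\mathcal E=|v|+\frac{1}{|v|}+\mathcal E,
\]
using $\int|w|^2M=3$. The error $\mathcal E$ comes only from shells with $s>|v|$, and it is bounded, together with its first two derivatives, by $C\int_{|w|>|v|}\langle w\rangle^{3}M\le Ce^{-c|v|^2}$.

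Differentiating twice then gives the claimed expansion: $\nabla^2|v|=\Pi_v^\perp/|v|$ and $\nabla^2(1/|v|)=|v|^{-3}(3\hat v\otimes\hat v-\Id)$.

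The main obstacle is justifying the derivative bounds on $\mathcal E$. I would handle this by writing $\mathcal E$ explicitly as an integral over $s>|v|$ of the difference between the inner shell formula $s+|v|^2/(3s)$ and the outer formula $|v|+s^2/(3|v|)$, weighted by the radial Gaussian density. This expression is smooth in $|v|$. Differentiating under the integral, the boundary terms vanish because the two formulas agree to first order at $s=|v|$. The remaining integrands are polynomial in $s$ and $|v|^{-1}$ on $|v|>1$, which gives the bound $Ce^{-c|v|^2}$.

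Finally, the contraction \eqref{eq:core-radial-contraction} follows immediately. We have $\Pi_v^\perp:(v\otimes v)=0$ and $(3\hat v\otimes\hat v-\Id):(v\otimes v)=2|v|^2$, so
\[
A[M]:(v\otimes v)=\frac{2|v|^2}{|v|^3}+O\bigl(|v|^2e^{-c|v|^2}\bigr)=\frac{2}{|v|}+O\bigl(e^{-c|v|^2}\bigr),
\]
where the polynomial factor is absorbed by decreasing $c$.
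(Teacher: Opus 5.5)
Your proof is correct, but it takes a different route from the paper. The paper decomposes $A[M](v)=\lambda^\parallel_M(r)\frac{v\otimes v}{r^2}+\lambda^\perp_M(r)\Pi^\perp_v$. It imports from \cite[Proposition~3.2]{GG16} the integral formula $\lambda^\parallel_M(r)=\frac{2}{3r^3}\int_{|w|<r}|w|^2M\d w+\frac23\int_{|w|>r}\frac{M(w)}{|w|}\d w$. It then recovers $\lambda^\perp_M$ from the trace identity $\lambda^\parallel_M+2\lambda^\perp_M=2\,(|\cdot|^{-1}*M)$, evaluated by Newton's theorem. Because both eigenvalues are given directly as integrals, the Gaussian smallness of the remainders is read off from the tails $\int_{|w|>r}$, and nothing has to be differentiated.

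You instead work with the single potential $\phi=|\cdot|*M$, $A[M]=\nabla^2\phi$, computed via the shell average of $|v-w|$. This is self-contained and actually reproduces the ingredient the paper cites: $\phi''(r)$ is exactly the formula for $\lambda^\parallel_M$ above, and $\phi'(r)/r=\lambda^\perp_M$. The price is that you must control two derivatives of the remainder, and your justification for this is sound. It becomes fully transparent once you note that the difference of the inner and outer shell formulas factors as $s+\frac{r^2}{3s}-r-\frac{s^2}{3r}=-\frac{(s-r)^3}{3rs}$. Hence $\mathcal E(r)=-\int_r^\infty\rho(s)\frac{(s-r)^3}{3rs}\d s$ with $\rho(s)=4\pi s^2(2\pi)^{-3/2}e^{-s^2/2}$. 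The integrand vanishes to third order at $s=r$, so no boundary terms appear in $\mathcal E'$ or $\mathcal E''$. For $r>1$ this gives $|\nabla^2\mathcal E(|v|)|\le|\mathcal E''(r)|+|\mathcal E'(r)|/r\le C\int_{|w|>r}\langle w\rangle^2M\d w\le Ce^{-cr^2}$.

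The fragment ``$\operatorname{tr}A[M]=2a/2\cdot 0+\dots$'' and the detour through $A[M]:(v\otimes v)$ in your first paragraph are stray and can be dropped.
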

	
	\begin{proof}
	By rotational symmetry of the Maxwellian distribution
		\begin{align*}
		A[M] (v)= \lambda_M^\parallel(r)\frac{v\otimes v}{r^2} +
		\lambda_M^\perp(r)\Pi^\perp_v
		\end{align*} 
	where we used the notation $r=|v|$ and $\lambda_M^\parallel, \lambda_M^\perp$ are radial functions. By~\cite[Proppositon 3.2]{GG16} we have 
	\begin{align*}
		\lambda_M^\parallel(r)	=	\frac{2}{3r^3}	\int_{|w|<r}|w|^2M(w)\d w	+
		\frac23 \int_{|w|>r}\frac{M(w)}{|w|}\d w.
	\end{align*}
	Since $M$ is Maxwellian with unit temperature, we have
\begin{align*}
	\lambda_{M}^\parallel(r)=	\frac{2}{r^3}	-
	\frac{2}{3r^3}	\int_{|w|>r}|w|^2M(w)\d w	+
	\frac23	\int_{|w|>r}\frac{M(w)}{|w|}\d w.
\end{align*} 
Then, due to the Gaussian tail of the Maxwellian we obtain
\begin{align*}
		\lambda_{M}^\parallel(r)	=	\frac{2}{r^3}	+
	O \lp \exp \lp -cr^2 \rp \rp.
\end{align*}
To find the corresponding expansion for $\lambda_M^\perp$, we use the trace identity
\begin{align}\label{trace_identity}
		\lambda_M^\parallel(r)+2\lambda_M^\perp(r)=\mbox{Tr}(A[M])(v)	=	2 \int_{\R^3}	\frac{M(w)}{|v-w|} \d w. 
\end{align}
	The latter can be expressed as 
	\begin{align*}
		\int_{\R^3}	\frac{M(w)}{|v-w|} \d w = \frac1r\int_{|w|<r}M(w)\d w
		+\int_{|w|>r}\frac{M(w)}{|w|}\d w.
	\end{align*}
	Inserting this into~\eqref{trace_identity} gives 
	\begin{align*}
			\lambda_{c_R}^\perp(r)	=	\frac{1}{r}	-
		\frac{1}{r^3} 	+ 	O\!\left(e^{-cr^2}\right).
	\end{align*}
This finishes the proof. 
	\end{proof}

\begin{lemma}
	\label{lem:prop_G}
	The function $G$ defined in~\eqref{def:G} satisfies
	\begin{equation}\label{eq:profile-sign-identities}
		\int_{\R^3}\frac{G(y)}{|y|^4}\d y
		=
		\frac23,
		\qquad
		\int_{\R^3}\frac{\Pi_y^\perp:\nabla^2G(y)}{|y|^2}\d y
		=
		2\int_{\R^3}\frac{G(y)}{|y|^4}\d y
		=
		\frac43.
	\end{equation}
\end{lemma}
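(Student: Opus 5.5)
The plan is to exploit that $G$ is radial and to reduce both identities to one-dimensional integrals in $r=|y|$. Write $G(y)=Z^{-1}\phi(r)$ with $\phi(r)=\exp(-r^2-r^{-2})$. Note that $\phi$ and all its derivatives vanish faster than any power as $r\to0$ and as $r\to\infty$. This flatness is what will kill every boundary term below.

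\textbf{First identity.} In polar coordinates,
\begin{align*}
\int_{\R^3}\frac{G(y)}{|y|^4}\d y=\frac{\int_0^\infty r^{-2}\phi(r)\d r}{\int_0^\infty r^{2}\phi(r)\d r}.
\end{align*}
For the numerator I would substitute $r\mapsto 1/r$. Since $\phi(1/r)=\phi(r)$, this gives $\int_0^\infty r^{-2}\phi\d r=\int_0^\infty\phi\d r\eqqcolon I_0$.

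For the denominator I would use the classical formula
\begin{align*}
J(a,b)\coloneqq\int_0^\infty e^{-ar^2-b r^{-2}}\d r=\tfrac12\sqrt{\pi/a}\,e^{-2\sqrt{ab}},
\end{align*}
so that $I_0=J(1,1)=\tfrac{\sqrt\pi}{2}e^{-2}$. Differentiating under the integral sign in $a$ and evaluating at $a=b=1$ gives
\begin{align*}
\int_0^\infty r^2\phi\d r=-\partial_aJ(1,1)=\tfrac{\sqrt\pi}{2}e^{-2}\bigl(\tfrac12+1\bigr)=\tfrac32 I_0.
\end{align*}
The ratio is therefore $2/3$. As an alternative to quoting $J(a,b)$, one can obtain the relation $\int r^2\phi=\tfrac32\int\phi$ directly: integrate by parts $\int r\cdot r\phi\d r$, using $\phi'=(-2r+2r^{-3})\phi$, and then apply the $r\mapsto1/r$ symmetry once more.

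\textbf{Second identity.} For radial $G$ one has
\begin{align*}
\nabla^2G=G''(r)\,\frac{y\otimes y}{r^2}+\frac{G'(r)}{r}\,\Pi_y^\perp,
\end{align*}
and $\Pi_y^\perp$ annihilates $y\otimes y$ and has trace $2$. Hence $\Pi_y^\perp:\nabla^2G=2G'(r)/r$. Then
\begin{align*}
\int_{\R^3}\frac{\Pi_y^\perp:\nabla^2G}{|y|^2}\d y=8\pi\int_0^\infty\frac{G'(r)}{r}\d r=8\pi\int_0^\infty\frac{G(r)}{r^2}\d r=2\int_{\R^3}\frac{G(y)}{|y|^4}\d y,
\end{align*}
where the middle step is an integration by parts whose boundary term $G/r$ vanishes at $0$ and at $\infty$ by flatness. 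Combining with the first identity gives the value $4/3$.

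The only delicate point is the evaluation of the ratio $\int r^2\phi/\int\phi$. I expect it to go through cleanly with either the differentiated Gaussian-type formula or the self-similar $r\mapsto1/r$ trick. The remaining steps, namely the radial Hessian formula and the justification of the boundary terms, are routine.
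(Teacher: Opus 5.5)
Your proof is correct and follows the paper's argument. The second identity is handled identically: the radial Hessian gives $\Pi_y^\perp:\nabla^2G = 2G'(r)/r$, followed by one integration by parts. For the first identity, the paper obtains $\int_0^\infty r^2\phi\d r=\tfrac32\int_0^\infty\phi\d r$ through your alternative route: it integrates $(r\phi)'$ over $(0,\infty)$ and uses the $r\mapsto 1/r$ symmetry, which is self-contained and avoids quoting the closed form of $J(a,b)$.
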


\begin{proof}
	Set
	\begin{align*}
		J_p
		\coloneqq
		\int_0^\infty r^p \exp \lp -r^2-r^{-2} \rp  \d r.
	\end{align*}
	The change of variables $r\mapsto r^{-1}$ gives $J_0=J_{-2}$.  Moreover,
	integrating the derivative of $r \exp \lp -r^2-r^{-2} \rp$ gives
	$3J_{-2}=2J_2$.  Since $Z=4\pi J_2$,
	\begin{align*}
			\int_{\R^3}\frac{G(y)}{|y|^4}\d y	=	\frac{4\pi J_{-2}}{Z}
		=	\frac23.
	\end{align*}
	For the second identity, write $G(y)=g(r)$ with $r=|y|$.  Radiality gives
\begin{align*}
	\frac{\Pi_y^\perp}{|y|}:\nabla^2G(y)
	=
	\frac{2g'(r)}{r^2}.
\end{align*}
	Thus, using the flatness of $g$ at the origin and its Gaussian decay at
	infinity,
	\begin{align*}
		\int_{\R^3}\frac{\Pi_y^\perp:\nabla^2G(y)}{|y|^2}\d y
		=
		8\pi\int_0^\infty\frac{g'(r)}r\d r=
		8\pi\int_0^\infty\frac{g(r)}{r^2}\d r=
		2\int_{\R^3}\frac{G(y)}{|y|^4}\d y.
	\end{align*}
	This concludes the proof.
\end{proof}
	
	\printbibliography
	
\end{document}